\def\ARXIV{1}
\def\ACC{2}
\def\MODE{\ARXIV}
\newtheorem{theorem}{Theorem}
\newtheorem{lemma}{Lemma}
\newtheorem{problem}{Problem}
\newtheorem{remark}{Remark}
\newcommand{\norm}[1]{\left\lVert#1\right\rVert}
\newcommand{\qApprox}{\textrm{quad}}
\newcommand{\R}{\mathbb{R}}
\newcommand{\J}{\mathcal{J}}
\newcommand{\sigmoid}{\text{sigmoid}}
\title{\LARGE \bf
  Differential Dynamic Programming for Nonlinear Dynamic Games
}
\author{Bolei Di$^{1}$ and Andrew Lamperski$^{2}$
  \thanks{The authors are with the Department of Electrical and
    Computer Engineering, University of Minnesota, Minnesota, USA}
  \thanks{$^{1}$ {\tt\small dixxx047@umn.edu}, $^{2}$ {\tt\small alampers@umn.edu}}%
}
\begin{document}

\maketitle
\thispagestyle{empty}
\pagestyle{empty}

\begin{abstract}
  Dynamic games arise when multiple agents with differing objectives
  choose control inputs to a dynamic system.
  Dynamic games model a wide variety of applications in economics,
  defense, and energy systems.
  However, compared to single-agent control problems, the
  computational methods for dynamic games are relatively limited.
  As in the single-agent case, only very specialized dynamic games can
  be solved exactly, and so approximation algorithms are
  required.
  This paper extends the differential dynamic programming algorithm
  from single-agent control to the case of non-zero sum
  full-information dynamic games. 
  The method works by computing quadratic approximations to the
  dynamic programming equations.
  The approximation results in static quadratic games which are solved
  recursively.
  Convergence is proved by showing that the algorithm
  iterates sufficiently close to iterates of Newton's method to
  inherit its convergence properties.
  A numerical example is provided. 

\end{abstract}


\section{INTRODUCTION}

Dynamic games arise when multiple agents with differing objectives act
upon a dynamic system.
In contrast, optimal control can be viewed as the specialization of
dynamic games to the case of a single agent.
Dynamic games have many applications including
 pursuit-evasion \cite{rusnak2005lady},
active-defense
\cite{prokopov2013linear,garcia2014cooperative},
economics \cite{el2013dynamic} and the smart grid
\cite{zhu2012differential}.
Despite a wide array of applications, the computational methods for
dynamic games are considerably less developed than the single-agent
case of optimal control.

This paper shows how the differential dynamic programming (DDP) method from
optimal control \cite{jacobson1968new} extends to discrete-time
non-zero sum dynamic games.
Closely related works from \cite{sun2015game,sun2016stochastic} focus
on the case of zero-sum dynamic games.
Classical differential dynamic programming operates by iteratively solving
quadratic approximations to the Bellman equation from optimal control.
Our method applies similar methods to the generalization of the Bellman
equation for dynamic games \cite{basar1999dynamic}.
Here, at each stage, the algorithm solves a static game formed by
taking quadratic approximations to the value function of each agent.
We show that the algorithm converges quadratically in the
neighborhood of a strict Nash equilibrium.
To prove convergence, we extend arguments from
\cite{murray1984differential}, which relate DDP iterates to those of
Newton's method, to the case of dynamic games.
In particular, we extend the recursive solution for Newton's method
\cite{dunn1989efficient} to dynamic games, and demonstrate that the
solutions produced Newton's method and the DDP method are close.

\subsection{Related Work}

A great deal of work on algorithmic solutions to dynamic
games has been done.
This subsection reviews related work which is a bit more removed from
the closer references described above.
As we will see, most works solve somewhat different problems compared
to the current paper.

Methods for finding Nash equilibria via
extremum seeking were presented in
\cite{frihauf2012nash,frihauf2013finite,frihauf2013nash}. In
particular, the controllers drive the states of a dynamic system to
Nash equilibria of static games.
A related
method for linear quadratic games was presented in
\cite{pan2004sliding}.
For these works, each agent only requires measurements of its own
cost. However, it is limited to finding equilibria in steady
state. Our method requires each agent to have explicit model
information, but gives equilibria over finite horizons. This is
particularly important for games in which trajectories from initial to
final states are desired.


Several works focus on the solution to dynamic potential games.
Potential games are more tractable than general dynamic games, as they
can be solved using methods from single-agent optimal control
\cite{gonzalez2016survey,gonzalez2013discrete,gonzalez2014dynamic,mazalov2017linear,zazo2015new,zazo2015dynamic}. 
However, potential games satisfy restrictive symmetry
conditions.
In particular, the assumption precludes interesting applications with
heterogeneous agents. 

\subsection{Paper Outline}
The general problem is formulated in Section~\ref{sec:problem}. The
algorithm is described in Section~\ref{sec:alg} and the convergence
proof is sketched in Section~\ref{sec:convergence}. A numerical
example is described in~\ref{sec:example}. 
\if\MODE\ARXIV{
Conclusions and future
directions are discussed in~\ref{sec:conclusion} while
  the proof details are given in the appendix.
}
\fi
\if\MODE\ACC{
  Conclusions and future
  directions are discussed in~\ref{sec:conclusion} while
  some of the
  proof details are given in the appendix. However, due to space
  constraints, some details are omitted.
}
\fi


\section{DETERMINISTIC NONLINEAR DYNAMIC GAME PROBLEM}
\label{sec:problem}
In this section, we introduce deterministic finite-horizon nonlinear game problem, the notations for the paper, the solution concept and convergence criterion of our proposed method.

The main problem of interest is a deterministic full-information dynamic game of the
form below.

\begin{problem}
  \label{problem:original}
  {\it
    \textbf{Nonlinear dynamic game} \\
    Each player tries to minimize their own cost
    \begin{align}
     \label{eq:cost}
      J_n (u) = \sum_{k = 0}^T c_{n, k}(x_k, u_{:,k})
    \end{align}
    Subject to constraints
    \begin{subequations}
      \label{eq:dynamics}
      \begin{align}
        & x_{k+1} = f_k(x_k, u_{:,k}) \\
        & x_0 \textrm{ is fixed.}
      \end{align}
    \end{subequations}
  }
\end{problem}
Here, the state of the system at time $k$ is denoted by $x_k\in
\mathbb{R}^{n_x}$.

Player $n$'s
input at time $k$ is given by $u_{n, k} \in
\mathbb{R}^{n_{u_{n}}}$. The vector of all player actions at time $k$ is
denoted by: $u_{:, k} = [u_{1,k}^\top, u_{2,k}^\top, \ldots,
u_{N,k}^\top]^\top \in \mathbb{R}^{n_u}$.
The cost for player $n$ at time $k$ is $c_{n,k}(x_k,u_{:,k})$. This
encodes the fact that the costs for each player can depend on the
actions of all the players.

In later analysis, some other notation will be helpful.
The vector player $n$'s
actions over all time is denoted by $u_{n,:} = [u_{n,0}^\top, u_{n,1}^\top, \ldots,
u_{n,T}^\top]^\top$. The vector of all actions other than those of
player $n$ is denoted by
$u_{-n,:} = [u_{1,:}^\top,\ldots,u_{n-1,:}^\top,u_{n+1,:}^\top,\ldots,u_{N,:}^\top]^\top$.
The vector of all states is denoted by
$x=[x_0^\top,x_1^\top,\ldots,x_T^\top]^\top$ while the vector of all inputs
is given by $u = [u_{1,:}^\top, u_{N,:}^\top, \ldots,
u_{N,:}^\top]^\top$.

Note that since the initial state is fixed and the dynamics are
deterministic, the costs for each player can be expressed as functions
of the vector of actions, $J_n(u)$.
%


A \emph{local Nash equilibrium} for problem
\ref{problem:original} is a set of inputs $u^{\star}$ such that
\begin{equation}
  \label{eq:LNE}
  J_n(u_{n,:}, u_{-n, :}^{\star}) \geq J_n(u^{\star}), \  n = 1, 2, \ldots, N
\end{equation}
for all $u_{n,:}$ in a neighborhood of $u_{n,:}^\star$. In the context
of dynamic games, this correponds to an open-loop, local Nash
equilibrium \cite{basar1999dynamic}.
The equilibrium is called a \emph{strict local Nash equilibrium} if
the inequality in \eqref{eq:LNE} is strict for all $u_{n,:}\ne
u_{n,:}^\star$ in a neighborhood of $u_{n,:}^\star$.


In this paper, we focus on computing Nash equilibria by solving the
following necessary conditions for local Nash equilibria:
\begin{problem}
  {\it
    \textbf{Necessary conditions}
    \begin{equation}
      \label{problem:necessary}
      \frac{\partial J_n}{\partial u_{n,:}} = 0
    \end{equation}
    for $n=1,\ldots,N$.
  }
\end{problem}

For convenient notation, we stack all of the gradient vectors from
\eqref{problem:necessary} into a single vector:
\begin{align}
  \label{eq:JFun}
  \mathcal{J}(u) =
  \begin{bmatrix}
    \frac{\partial J_1}{\partial u_{1,:}} & \frac{\partial J_2}{\partial u_{2,:}} & \cdots & \frac{\partial J_N}{\partial u_{N,:}}
  \end{bmatrix}^\top.
\end{align}
Thus, the necessary condition is equivalent to $\J(u) = 0$. Such
conditions arise in works such as \cite{facchinei2007generalized,dutang2013survey}.

We will present a method for solving these necessary conditions for a
local Nash equilibrium via differential dynamic programming (DDP). In
principle, an input vector satisfying the necessary conditions
$\J(u)=0$ could be found via Newton's method. Similar to the
single-player case from \cite{murray1984differential}, we analyze the
convergence properties of DDP by proving that its solution is close to
that computed by Newton's method.

To guarantee convergence, we assume that $\J(u)$ satisfies the
smoothness and non-degeneracy conditions required by Newton's method \cite{nocedal2006numerical}.
For smoothness,  we assume that $\mathcal{J}(u)$ is
differentiable with locally Lipschitz derivatives.
For non-degeneracy, we assume that $\frac{\partial
  \mathcal{J}(u^*)}{\partial u}$ is invertible. A sufficient condition
for the smoothness assumptions is that the functions $f_k$ and
$c_{n,k}$ are twice continuously differentiable with Lipschitz second
derivatives. In our DDP solution, we will solve a sequence of stage-wise
quadratic games. As we will see, a sufficient condition for invertibility of
$\frac{\partial \J(u^*)}{\partial u}$ is the unique solvability of the
stage-wise games near the equilibrium.

\section{Differential Dynamic Programming Algorithm}
\label{sec:alg}

This section describes the differential dynamic programming algorithm
for dynamic games of the form in Problem~\ref{problem:original}.
Subsection~\ref{sec:algOverview} gives a high-level description of the
algorithm, while Subsection~\ref{sec:algMatrices} describes the
explicit matrix calculations used in algorithm. 

\subsection{Algorithm Overview}
\label{sec:algOverview}
The equilibrium solution to the general dynamic game can be characterized by the
Bellman recursion:
\begin{subequations}
  \begin{align}
    \label{eq:bellman}
    V_{n,T+1}^\star(x_{T+1}) &= 0 \\
    Q_{n,k}^\star(x_k,u_{:,k}) &= c_{n,k}(x_k,u_{:,k}) +
                                 V_{n,k+1}^\star(f_k(x_k,u_{:,k})) \\
    \label{eq:QGame}
    V_{n,k}^\star(x_k) &= \min_{u_{n,k}} Q_{n,k}^\star(x_k,u_{:,k}).
  \end{align}
\end{subequations}
In particular, if a solution to the Bellman recursion is found, the
corresponding optimal strategy for player $n$ at time $k$ would be the
$u_{n,k}$ which minimizes $Q_{n,k}^\star(x_k,u_{:,k})$. Note that \eqref{eq:QGame} defines a static game with respect to the $u_{:,k}$ variable at step $k$.

The idea of the differentiable dynamic programming (DDP) is to
maintain quadratic approximations of $V_{n,k}^*$ and $Q_{n,k}^*$
denoted by $\tilde V_{n,k}$ and $\tilde Q_{n,k}$, respectively.

We need some notation for our approximations.
For a scalar-valued function, $h(z)$, we denote the quadratic
approximation near $\bar{z}$ by:
\begin{subequations}
  \label{eq:qApprox}
  \begin{align}
    \qApprox(h(z))_{\bar z} =&\frac{1}{2}
                               \begin{bmatrix}
                                 1 \\
                                 \delta z
                               \end{bmatrix}^\top
    \begin{bmatrix}
      2 h(\bar z) & \frac{\partial h(\bar z)}{\partial z} \\
      \frac{\partial h(\bar z)}{\partial z}^\top & \frac{\partial^2
        h(\bar z)}{\partial z^2}
    \end{bmatrix}
                                                   \begin{bmatrix}
                                                     1 \\
                                                     \delta z
                                                   \end{bmatrix}
    \\
    \delta z =& z-\bar z.
  \end{align}
\end{subequations}
If $h : \R^n\to \R^m$ we form the quadratic approximation by stacking
all of the quadratic approximations of the entries:
\begin{equation}
  \label{eq:stackedQuad}
  \qApprox(h(z))_{\bar z} = [\qApprox(h_1(z))_{\bar
    z},\ldots,\qApprox(h_m(z))_{\bar z}]^\top
\end{equation}

Let
\begin{equation}
  \label{eq:zDef}
  z_k = \begin{bmatrix}
    x_k \\
    u_{:,k}
  \end{bmatrix}
\end{equation}
and let $\bar x_k$ and $\bar
u_{:,k}$ be a trajectory of states and actions satisfying the dynamic
equations from \eqref{eq:dynamics}.
The approximate Bellman recursion around this trajectory is given by:
\begin{subequations}
  \label{eq:bellmanApprox}
  \begin{align}
    \tilde V_{n,T+1}(x_{T+1}) &= 0 \\
    \label{eq:Qquad}
    \tilde Q_{n,k}(z_k) &= \qApprox(
                          c_{n,k}(z_k) + \tilde V_{n,k+1}(f_k(z_k))
                          )_{\bar z_k}\\
    \label{eq:quadQGame}
    \tilde V_{n,k}(x_k) &= \min_{u_{n,k}} \tilde Q_{n,k}(x_k,u_{:,k}).
  \end{align}
\end{subequations}

Note that \eqref{eq:quadQGame} is now a quadratic game in the $u_{:,k}$
variables which has unique and ready solution \cite{basar1999dynamic}. Recall the $\J(u)$ function defined in \eqref{eq:JFun}.
A sufficient condition for solvability of these games is
given in terms of $\J(u)$ is given in the following lemma.
Its proof is in Appendix~\ref{app:quadSol}.

\begin{lemma}
  \label{lem:quadSol}
  {\it
    If $\frac{\partial \J(\bar u)}{\partial u}$ is invertible, the game defined by
    \eqref{eq:quadQGame} has a unique solution of the form:
    \begin{equation}
      \label{eq:localSolution}
      u_{:,k} = \bar u_{:,k} +  \tilde K_k \delta x_k + \tilde s_k.
    \end{equation}
  }
\end{lemma}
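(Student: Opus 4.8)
The plan is to reduce the stage-$k$ quadratic game to a single linear system through its first-order conditions, read off the affine solution, and then tie invertibility of that system back to invertibility of the global Jacobian $\partial \J(\bar u)/\partial u$. The whole argument runs by backward induction on $k$, with the inductive hypothesis being that $\tilde V_{n,k+1}$ is a well-defined quadratic, so that $\tilde Q_{n,k}$ in \eqref{eq:Qquad} is genuinely quadratic in $z_k$.

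First I would expand $\tilde Q_{n,k}(z_k)$ in the variable $z_k=[x_k^\top,u_{:,k}^\top]^\top$. Since it is quadratic, the Nash stationarity condition for player $n$, $\partial \tilde Q_{n,k}/\partial u_{n,k}=0$, is affine in $(x_k,u_{:,k})$. Stacking these $N$ conditions yields a linear system (writing $\delta u_{:,k}=u_{:,k}-\bar u_{:,k}$ and $\delta x_k=x_k-\bar x_k$)
\begin{equation*}
A_k\,\delta u_{:,k} = -\,b_k - B_k\,\delta x_k,
\end{equation*}
where $A_k$ is the stage game matrix whose $(n,m)$ block is $\partial^2 \tilde Q_{n,k}/(\partial u_{n,k}\,\partial u_{m,k})$ and $b_k,B_k$ collect the gradient and state-coupling terms. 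If $A_k$ is invertible the system has the unique solution $\delta u_{:,k}=\tilde K_k\,\delta x_k+\tilde s_k$ with $\tilde K_k=-A_k^{-1}B_k$ and $\tilde s_k=-A_k^{-1}b_k$, which is precisely the claimed form \eqref{eq:localSolution}. Substituting this feedback back into $\tilde Q_{n,k}$ produces a quadratic $\tilde V_{n,k}(x_k)$, closing the induction.

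The crux is therefore to show that nonsingularity of $\partial \J(\bar u)/\partial u$ forces each $A_k$ to be nonsingular. I would exhibit a block factorization of $\partial \J(\bar u)/\partial u$ that mirrors the backward sweep \eqref{eq:bellmanApprox}, extending the recursive Newton factorization of \cite{dunn1989efficient} to the game setting. The key observation is that the quadratic value function $\tilde V_{n,k+1}$ carried into \eqref{eq:Qquad} is exactly the Schur complement accumulated by eliminating the stage variables $u_{:,T},\ldots,u_{:,k+1}$ from the global system; consequently the pivot block remaining when one eliminates $u_{:,k}$ is precisely $A_k$. Since each elimination step is a block unit-triangular, hence nonsingular, operation, $\det\,\partial\J(\bar u)/\partial u$ equals $\prod_k \det A_k$ up to a nonzero factor. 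Invertibility of the global Jacobian is thus equivalent to invertibility of every $A_k$, which in particular gives invertibility at stage $k$ and completes the induction.

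The main obstacle will be handling the non-symmetry of the game Jacobian. Unlike the single-agent Hessian, the block rows of $\partial \J/\partial u$ originate from different players' objectives, so $A_k$ need not be symmetric and the self-adjoint $LDL^\top$ bookkeeping of ordinary Newton/Riccati arguments must be replaced by a general block $LU$-type elimination. I would need to verify that eliminating a later stage leaves the earlier blocks in a form whose leading pivot is still exactly $A_k$ — that is, that the coupled feedback gains $\tilde K_k$ enter consistently across the player blocks — and that the adjoint recursion generating the state-coupling term $B_k$ reproduces the off-diagonal structure of the global Jacobian. Once this structural correspondence is pinned down, the invertibility transfer and the affine form \eqref{eq:localSolution} follow at once.
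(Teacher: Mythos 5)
Your first step---writing the stage-wise Nash stationarity conditions of \eqref{eq:quadQGame} as the affine system \eqref{eq:ddp_necessary} and reading off $\tilde K_k = -\tilde F_k^{-1}\tilde P_k$, $\tilde s_k = -\tilde F_k^{-1}\tilde H_k$---matches the paper (Lemma~\ref{lem:ddp_matrices}). The gap is in your invertibility-transfer argument. You claim that the DDP value function $\tilde V_{n,k+1}$ is \emph{exactly} the Schur complement obtained by eliminating $u_{:,T},\ldots,u_{:,k+1}$ from $\partial\J(\bar u)/\partial u$, so that your stage matrix $A_k$ (the paper's $\tilde F_k$) is exactly the elimination pivot and $\det\,\partial\J(\bar u)/\partial u$ equals $\prod_k \det \tilde F_k$ up to a nonzero factor. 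That identification is false---and if it were true, DDP would literally coincide with Newton's method and the paper's entire closeness analysis (Section~\ref{sec:close}) would be vacuous. The exact pivots of the block elimination are the matrices $F_k$ of the Newton recursion (Lemma~\ref{lem:newtonMatrices}), not the DDP matrices $\tilde F_k$. The two recursions differ in precisely one place: DDP weights the dynamics curvature $G_k^l$ by the gradient $\tilde S_{n,k+1}^{1x^l}$ of the approximate value function, which already incorporates the feedback solutions of the later stage games (see \eqref{eq:TildeDDef}), whereas the Newton/elimination recursion weights $G_k^l$ by the exact adjoint $\Omega_{n,k+1}^l$ computed along the nominal trajectory (see \eqref{eq:newton_dp_solution_D}). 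These weights agree only to first order, $\tilde S_{n,k+1}^{1x} = \Omega_{n,k+1} + O(\epsilon)$ with $\epsilon = \|\bar u - u^\star\|$ (equation \eqref{eq:close_S1}), hence $\tilde F_k = F_k + O(\epsilon)$ but not $\tilde F_k = F_k$.

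Consequently, invertibility of $\partial\J(\bar u)/\partial u$ at a fixed $\bar u$ does not by itself force $\tilde F_k$ to be invertible; your determinant identity only delivers invertibility of the Newton pivots $F_k$. The paper closes this gap with two steps that your proposal is missing: (i) since the eigenvalues of $\nabla_u\J$ are bounded away from zero in a neighborhood of $u^\star$, the exact pivots $F_k$ have uniformly bounded inverses there; (ii) by the closeness result $\tilde F_k = F_k + O(\epsilon)$ (Lemma~\ref{lem:matrixClose}, equation \eqref{eq:close_F}), a perturbation argument gives $\tilde F_k^{-1} = F_k^{-1} + O(\epsilon)$, so $\tilde F_k$ is invertible for $\bar u$ sufficiently close to $u^\star$. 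This is also why the conclusion is genuinely local: the paper's guarantee holds for $\bar u$ in a neighborhood of the equilibrium, whereas your argument, were it correct, would yield unique solvability at any $\bar u$ with invertible Jacobian. To repair your proof you would have to carry out the elimination with the adjoint-weighted matrices $F_k$ and then invoke the $O(\epsilon)$ comparison---which is exactly the paper's route.
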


In the notation defined above, we have
that $\delta x_k = x_k - \bar x_k$.
Note that if
$\frac{\partial \J(u^\star)}{\partial u}$ is invertible, then
$\frac{\partial \J(\bar
  u)}{\partial u}$ is invertible for all $\bar u$ in a neighborhood of $u^\star$.

Here we provide the DDP algorithm for applying DDP game solution in pseudo code.
\begin{algorithm}
  \caption{\label{alg:ddp} Differential Dynamic Programming for Nonlinear Dynamic Games}
  \begin{algorithmic}
    \State Generate an initial trajectory $\bar x, \bar u$
    \Loop
    \State{\textbf{Backward Pass:}}
    \State Perform the approximate Bellman recursion from \eqref{eq:bellmanApprox}
    \State Compute $\tilde K_k$ and $\tilde s_k$ from \eqref{eq:localSolution}.
    \State{\textbf{Forward Pass:}}
    \State{Generate a new trajectory using the affine policy defined
      by $\tilde K_k, \tilde s_k$}
    \EndLoop
  \end{algorithmic}
\end{algorithm}

\subsection{Implementation Details}
\label{sec:algMatrices}

All of the operations in the backwards pass of the DDP algorithm,
Algorithm~\ref{alg:ddp}, can be expressed more explicitly in terms of
matrices.

To construct the required matrices, we define the following
approximation terms:

\begin{subequations}
  \label{eq:approximations}
  \begin{align}
    & A_k = \frac{\partial f_k(x_k, u_{:,k})}{\partial x_k} \Big|_{\bar u},
      \quad B_{k} = \frac{\partial f_k(x_k, u_{:,k})}{\partial u_{:,k}} \Big|_{\bar u} \\
    & G_k^l =
      \begin{bmatrix}
        \frac{\partial^2 f^l_k}{\partial x_k^2} & \frac{\partial^2 f^l_k}{\partial x_k   \partial u_{:,k}} \\
        \frac{\partial^2 f^l_k}{\partial u_{:,k} \partial x_k} & \frac{\partial^2 f^l_k}{\partial u_{:,k}^2}
      \end{bmatrix} \Bigg|_{\bar{u}}, \  l = 1, 2, \ldots, n_x \\
    & R_k(\delta x_k, \delta u_{:,k}) =
      \begin{bmatrix}
        \begin{bmatrix}
          \delta x_k \\
          \delta u_{:,k}
        \end{bmatrix}^T G_k^0
        \begin{bmatrix}
          \delta x_k \\
          \delta u_{:,k}
        \end{bmatrix} \\
        \begin{bmatrix}
          \delta x_k \\
          \delta u_{:,k}
        \end{bmatrix}^T G_k^1
        \begin{bmatrix}
          \delta x_k \\
          \delta u_{:,k}
        \end{bmatrix} \\
        \vdots \\
        \begin{bmatrix}
          \delta x_k \\
          \delta u_{:,k}
        \end{bmatrix}^T G_k^{n_x}
        \begin{bmatrix}
          \delta x_k \\
          \delta u_{:,k}
        \end{bmatrix}
      \end{bmatrix}
  \end{align}
  \begin{align}
    M_{n, k} &= \left.
               \begin{bmatrix}
                 2 c_{n, k} & \frac{\partial c_{n, k}}{\partial x_k}
                 & \frac{\partial c_{n, k}}{\partial u_{:,k}} \\
                 \frac{\partial c_{n, k}}{\partial x_k}^\top
                 & \frac{\partial^2 c_{n, k}}{\partial x_k^2}
                 & \frac{\partial^2 c_{n, k}}{\partial x_k \partial u_{:,k}} \\
                 \frac{\partial c_{n, k}}{\partial u_{:,k}}^\top
                 & \frac{\partial^2 c_{n, k}}{\partial u_{:,k}\partial x_k}
                 & \frac{\partial^2 c_{n, k}}{\partial u_{:,k}^2}
               \end{bmatrix} \right\rvert_{\bar u}
                   \label{eq:cost_devs} \\
    \nonumber
             &=
               \begin{bmatrix}
                 M_{n, k}^{11} & M_{n, k}^{1x} & M_{n, k}^{1u} \\
                 M_{n, k}^{x1} & M_{n, k}^{xx} & M_{n, k}^{xu} \\
                 M_{n, k}^{u1} & M_{n, k}^{ux} & M_{n, k}^{uu}
               \end{bmatrix}.
  \end{align}
\end{subequations}

Using the notation from \eqref{eq:qApprox}, \eqref{eq:stackedQuad} and \eqref{eq:zDef},
the second-order approximations of the dynamics and
cost are given by:
\begin{subequations}
  \begin{align}
    \label{eq:dynQuad}
    & \qApprox(f_k(z_k))_{\bar z_k} = f_k(\bar z_k) +  A_k \delta x_k +
                                    B_k \delta u_{:,k} + R_k(\delta z_k) \\
    & \qApprox(c_{n,k}(z_k))_{\bar z_k} = \begin{bmatrix}
      1 \\
      \delta z_k
    \end{bmatrix}^\top M_{n,k}
    \begin{bmatrix}
      1 \\
      \delta z_k
    \end{bmatrix}.
  \end{align}
\end{subequations}

By construction $\tilde V_{n,k}(x_k)$ and $\tilde Q_{n,k}(x_k,u_{:,k})$ are
quadratic, and so there must be matrices $\tilde S_{n,k}$ and $\tilde
\Gamma_{n,k}$ such that
\begin{subequations}
  \label{eq:ddp_cost_matrices}
  \begin{align}
    \label{eq:ddpV}
    \tilde V_{n,k}(x_k) &= \frac{1}{2}
                          \begin{bmatrix}
                            1 \\
                            \delta x_k \\
                          \end{bmatrix}^\top \begin{bmatrix}
                            \tilde S_{n, k}^{11} & \tilde S_{n, k}^{1x} \\
                            \tilde S_{n, k}^{x1} & \tilde S_{n, k}^{xx}
                          \end{bmatrix}
                                                   \begin{bmatrix}
                                                     1 \\
                                                     \delta x_k \\
                                                   \end{bmatrix} \\
    \label{eq:ddpQ}
    \tilde Q_{n,k}(x_k, u_{:,k}) &= \frac{1}{2}
                                   \begin{bmatrix}
                                     1 \\
                                     \delta x_k \\
                                     \delta u_{:,k}
                                   \end{bmatrix}^\top \tilde\Gamma_{n, k}
    \begin{bmatrix}
      1 \\
      \delta x_k \\
      \delta u_{:,k}
    \end{bmatrix}.
  \end{align}
\end{subequations}

\begin{lemma}
  \label{lem:ddp_matrices}
  {\it
    The matrices in \eqref{eq:ddp_cost_matrices} are defined recursively
    by $\tilde S_{n,T+1} = 0$ and:
    \begin{subequations}
      \begin{align}
        \label{eq:TildeDDef}
        & \tilde D_{n,k} = \sum_{l=1}^{n_x} \tilde S_{n,k+1}^{1x^l}
          G_k^l
        \\
        & \tilde \Gamma_{n, k} = M_{n, k} \nonumber \\
        \label{eq:TildeGammaBackprop}
        &+
          \begin{bmatrix}
            \tilde S_{n,k+1}^{11} & \tilde S_{n,k+1}^{1x}A_k & \tilde S_{n,k+1}^{1x}B_k \\
            A_k^\top \tilde S_{n,k+1}^{x1} & A_k^\top \tilde S_{n,k+1}^{xx}A_k + \tilde D_{n, k}^{xx} & A_k^\top \tilde S_{n,k+1}^{xx} B_k + \tilde D_{n, k}^{xu} \\
            B_k^\top \tilde S_{n,k+1}^{x1}  & B_k^\top \tilde S_{n,k+1}^{xx}A_k + \tilde D_{n,k}^{ux} & B_k^\top \tilde S_{n,k+1}^{xx} B_k + \tilde D_{n, k}^{uu}
          \end{bmatrix} \\
        &=
          \begin{bmatrix}
            \tilde \Gamma_{n, k}^{11} & \tilde \Gamma_{n, k}^{1x} & \tilde \Gamma_{n, k}^{1u_1} & \tilde \Gamma_{n, k}^{1u_2} & \cdots & \tilde \Gamma_{n, k}^{1u_N} \\
            \tilde \Gamma_{n, k}^{x1} & \tilde \Gamma_{n, k}^{xx} & \tilde \Gamma_{n, k}^{xu_1} & \tilde \Gamma_{n, k}^{xu_2} & \cdots & \tilde \Gamma_{n, k}^{xu_N} \\
            \tilde \Gamma_{n, k}^{u_11} & \tilde \Gamma_{n, k}^{u_1x} & \tilde \Gamma_{n, k}^{u_1u_1} & \tilde \Gamma_{n, k}^{u_1u_2} & \cdots & \tilde \Gamma_{n, k}^{u_1u_N} \\
            \tilde \Gamma_{n, k}^{u_21} & \tilde \Gamma_{n, k}^{u_2x} & \tilde \Gamma_{n, k}^{u_2u_1} & \tilde \Gamma_{n, k}^{u_2u_2} & \cdots & \tilde \Gamma_{n, k}^{u_2u_N} \\
            \vdots & \vdots & \vdots & \vdots & \ddots & \vdots \\
            \tilde \Gamma_{n, k}^{u_N1} & \tilde \Gamma_{n, k}^{u_Nx} & \tilde \Gamma_{n, k}^{u_Nu_1} & \tilde \Gamma_{n, k}^{u_Nu_2} & \cdots & \tilde \Gamma_{n, k}^{u_Nu_N}
          \end{bmatrix} \\
        \label{eq:TildeFDef}
        & \tilde F_k =
          \begin{bmatrix}
            \tilde \Gamma_{1,k}^{u_1u} \\
            \tilde \Gamma_{2,k}^{u_2u} \\
            \vdots \\
            \tilde \Gamma_{N,k}^{u_Nu}
          \end{bmatrix} =
        \begin{bmatrix}
          \tilde \Gamma_{1,k}^{u_1u_1} & \tilde \Gamma_{1,k}^{u_1u_2} & \cdots & \tilde \Gamma_{1,k}^{u_1u_N} \\
          \tilde \Gamma_{2,k}^{u_2u_1} & \tilde \Gamma_{2,k}^{u_2u_2} & \cdots & \tilde \Gamma_{2,k}^{u_2u_N} \\
          \vdots & \vdots & \ddots & \vdots \\
          \tilde \Gamma_{N,k}^{u_Nu_1} & \tilde \Gamma_{N,k}^{u_Nu_2} & \cdots & \tilde \Gamma_{N,k}^{u_Nu_N}
        \end{bmatrix} \\
        \label{eq:TildePHsKDef}
        & \tilde P_k =
          \begin{bmatrix}
            \tilde \Gamma_{1,k}^{u_1x} \\
            \tilde \Gamma_{2,k}^{u_2x} \\
            \vdots \\
            \tilde \Gamma_{N,k}^{u_Nx}
          \end{bmatrix}, \quad
        \tilde H_k =
        \begin{bmatrix}
          \tilde \Gamma_{1,k}^{u_11} \\
          \tilde \Gamma_{2,k}^{u_21} \\
          \vdots \\
          \tilde \Gamma_{N,k}^{u_N1}
        \end{bmatrix} \\
        \label{eq:ddpStrategyMatrices}
        & \tilde s_k = - \tilde F_k^{-1} \tilde H_k, \quad \tilde K_k =
          - \tilde F_k^{-1} \tilde P_k \\
        \label{eq:TildeSDef}
        & \tilde S_{n, k} =
          \begin{bmatrix}
            1 & 0 & \tilde s_k^\top \\
            0 & I  & \tilde K_k^\top
          \end{bmatrix} \tilde \Gamma_{n, k}
                                   \begin{bmatrix}
                                     1 & 0 \\
                                     0 & I \\
                                     \tilde s_k & \tilde K_k
                                   \end{bmatrix} ,
      \end{align}
    \end{subequations}
    for $k=T,T-1,\ldots,0$.
  }
\end{lemma}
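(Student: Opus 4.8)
The plan is to prove the recursion by backward induction on $k$, unwinding the approximate Bellman recursion \eqref{eq:bellmanApprox} one stage at a time. The base case is immediate: $\tilde V_{n,T+1}=0$ is the quadratic encoded by $\tilde S_{n,T+1}=0$. For the inductive step I assume that $\tilde V_{n,k+1}$ is the quadratic in $\delta x_{k+1}$ given by $\tilde S_{n,k+1}$ as in \eqref{eq:ddpV}, and I propagate it backward through the definition of $\tilde Q_{n,k}$ in \eqref{eq:Qquad} and the stage-wise minimization \eqref{eq:quadQGame}, showing that the resulting quadratics are exactly those produced by \eqref{eq:TildeGammaBackprop} and \eqref{eq:TildeSDef}.

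First I would form $\tilde Q_{n,k}=\qApprox(c_{n,k}(z_k)+\tilde V_{n,k+1}(f_k(z_k)))_{\bar z_k}$. The cost term contributes $M_{n,k}$ directly, by the definition \eqref{eq:cost_devs}. The substantive term is the composition $\tilde V_{n,k+1}(f_k(z_k))$ of the quadratic $\tilde V_{n,k+1}$ with the dynamics. Writing $\delta x_{k+1}=f_k(z_k)-f_k(\bar z_k)$ and inserting the second-order dynamics expansion \eqref{eq:dynQuad}, $\delta x_{k+1}=A_k\delta x_k+B_k\delta u_{:,k}+R_k(\delta z_k)$, into the expansion $\tilde V_{n,k+1}=\tfrac12\tilde S_{n,k+1}^{11}+\tilde S_{n,k+1}^{1x}\delta x_{k+1}+\tfrac12\delta x_{k+1}^\top\tilde S_{n,k+1}^{xx}\delta x_{k+1}$, I retain only terms up to second order in $\delta z_k$. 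Exactly two quadratic contributions survive: from the gradient block, the curvature of the dynamics enters as $\tilde S_{n,k+1}^{1x}R_k(\delta z_k)=\delta z_k^\top(\sum_{l}\tilde S_{n,k+1}^{1x^l}G_k^l)\delta z_k$, which is precisely the $\tilde D_{n,k}$ term of \eqref{eq:TildeDDef}; from the Hessian block, only $\tfrac12(A_k\delta x_k+B_k\delta u_{:,k})^\top\tilde S_{n,k+1}^{xx}(A_k\delta x_k+B_k\delta u_{:,k})$ is second order, every cross term involving $R_k$ being of third order or higher and therefore discarded. Collecting the constant, linear, and quadratic blocks yields \eqref{eq:TildeGammaBackprop}.

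Next I would carry out the minimization \eqref{eq:quadQGame}, which by Lemma~\ref{lem:quadSol} is a uniquely solvable quadratic game. Each player's stationarity condition $\partial\tilde Q_{n,k}/\partial u_{n,k}=0$ reads $\tilde\Gamma_{n,k}^{u_n1}+\tilde\Gamma_{n,k}^{u_nx}\delta x_k+\tilde\Gamma_{n,k}^{u_nu}\delta u_{:,k}=0$, directly from the quadratic form \eqref{eq:ddpQ}. Stacking these conditions over $n=1,\dots,N$ assembles exactly the matrices of \eqref{eq:TildeFDef}--\eqref{eq:TildePHsKDef} and gives the linear system $\tilde H_k+\tilde P_k\delta x_k+\tilde F_k\delta u_{:,k}=0$. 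Since $\tilde F_k$ is invertible (this is the content of Lemma~\ref{lem:quadSol}, under invertibility of $\partial\J(\bar u)/\partial u$), I solve for $\delta u_{:,k}=\tilde K_k\delta x_k+\tilde s_k$ with $\tilde s_k=-\tilde F_k^{-1}\tilde H_k$ and $\tilde K_k=-\tilde F_k^{-1}\tilde P_k$, matching \eqref{eq:ddpStrategyMatrices}. Substituting this affine policy back, the lift $[1;\delta x_k;\delta u_{:,k}]$ becomes a linear image of $[1;\delta x_k]$, and plugging it into \eqref{eq:ddpQ} produces exactly the congruence transformation \eqref{eq:TildeSDef} defining $\tilde S_{n,k}$, which closes the induction.

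The main obstacle is the composition step and its second-order bookkeeping. The crucial structural fact to establish carefully is that the curvature of the dynamics $G_k^l$ (equivalently, $R_k$) couples with the value function only through its \emph{gradient} block $\tilde S_{n,k+1}^{1x}$, producing the correction $\tilde D_{n,k}$, whereas the \emph{Hessian} block $\tilde S_{n,k+1}^{xx}$ interacts only with the \emph{linearized} dynamics $A_k,B_k$. Tracking which mixed terms are genuinely second order versus higher order, together with the accompanying factors of $\tfrac12$ between the $\qApprox$ convention in \eqref{eq:qApprox} and the block form of $\tilde\Gamma_{n,k}$, is where sign and scaling errors are most likely to arise; everything else is routine matrix algebra.
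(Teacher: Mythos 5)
Your proposal is correct and follows essentially the same route as the paper's own (much terser) proof: plug the second-order dynamics expansion into the Bellman backpropagation and drop cubic and higher terms to get \eqref{eq:TildeGammaBackprop}, derive the stacked stationarity condition $\tilde F_k \delta u_{:,k} + \tilde P_k \delta x_k + \tilde H_k = 0$, invert $\tilde F_k$ to obtain \eqref{eq:ddpStrategyMatrices}, and substitute the affine policy back into \eqref{eq:ddpQ} to get the congruence form \eqref{eq:TildeSDef}. Your version simply makes explicit the induction and the key bookkeeping fact---that the dynamics curvature $G_k^l$ couples only with the gradient block $\tilde S_{n,k+1}^{1x}$ (yielding $\tilde D_{n,k}$) while the Hessian block couples only with $A_k,B_k$---which the paper's proof leaves implicit.
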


\begin{proof}
  By construction we must have $\tilde
  S_{n,T+1}=0$. Plugging \eqref{eq:dynQuad} into \eqref{eq:TildeGammaBackprop}
  and dropping all cubic and higher terms gives \eqref{eq:Qquad}. Since
  $u_{:,k} = \bar u_{:,k}+\delta u_{:,k}$ and $\bar u_{:,k}$ is
  constant, the static game defined in \eqref{eq:quadQGame} can be
  solved in the $\delta u_{:,k}$ variables. Differentiating
  \eqref{eq:ddpQ} by $\delta u_{n,k}$, collecting the derivatives for all players and setting them to zero leads to the
  necessary condition for an equilibrium:
  \begin{equation}
    \label{eq:ddp_necessary}
    \tilde F_k \delta u_{:,k} + \tilde P_k \delta x_k + \tilde H_k = 0.
  \end{equation}
  Thus, the matrices for the equilibrium strategy are given in
  \eqref{eq:ddpStrategyMatrices}. Plugging \eqref{eq:localSolution} into
  \eqref{eq:ddpQ} leads to \eqref{eq:TildeSDef}.
\end{proof}

\begin{remark}
  The next section will describe how the algorithm converges
  to strict Nash equilibria if it begins sufficiently close. To ensure
  that the algorithm converges
  regardless of initial condition, a Levenberg\--Marquardt style
  regularization can be employed. Such regularization has been used in
  centralized DDP algorithms, \cite{liao1991convergence,tassa2011theory}, to
  ensure that the required inverses exist and that the solution
  improves. In the current setting, such regularization would
  correspond to using a regularization of the form $\tilde F_k +
  \lambda I$ where $\lambda \ge 0$ is chosen sufficiently large to
  ensure that the matrix is positive definite. 
\end{remark}

\section{Convergence}
\label{sec:convergence}

This section outlines the convergence behavior of the DDP algorithm
for dynamic games. The main result is Theorem~\ref{thm:main} which
demonstrates quadratic convergence to local Nash equilibria: 
\begin{theorem}
  \label{thm:main}
  {\it
    If $u^\star$ is a strict local equilibrium such that
    $\frac{\partial \J(u^*)}{\partial u}$ is invertible, then
    the DDP algorithms converges locally to $u^{\star}$ at a quadratic
    rate.
  }
\end{theorem}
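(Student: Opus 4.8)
The plan is to follow the route advertised in the introduction: treat the DDP iteration as a perturbation of Newton's method applied to the stacked stationarity map $\J(u)=0$, and show the two iterations remain close enough that DDP inherits Newton's rate. Under the stated hypotheses---$\J$ differentiable with locally Lipschitz derivative and $\frac{\partial \J(u^\star)}{\partial u}$ invertible---the classical theory of Newton's method guarantees a neighborhood of $u^\star$ on which the Newton iterate $\hat u$ obtained from a nominal point $\bar u$ satisfies $\norm{\hat u - u^\star} = O(\norm{\bar u - u^\star}^2)$. It therefore suffices to prove that the DDP iterate $\tilde u$, produced from the same $\bar u$, obeys $\norm{\tilde u - \hat u} = O(\norm{\bar u - u^\star}^2)$; the triangle inequality then delivers the claimed quadratic convergence of $\tilde u$ to $u^\star$ and, in particular, local convergence once the neighborhood is taken small enough.

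Both updates solve a linear system in the full increment $\delta u$ with the \emph{same} right-hand side $-\J(\bar u)$: Newton solves $A\,\delta u = -\J(\bar u)$ with $A = \frac{\partial \J(\bar u)}{\partial u}$, while the backward pass of Lemma~\ref{lem:ddp_matrices}, followed by the forward pass, implicitly solves $\tilde A\,\delta u = -\J(\bar u)$ for a matrix $\tilde A$ determined by the recursion \eqref{eq:ddp_cost_matrices}--\eqref{eq:TildeSDef}. To make this precise I would first extend the recursive (dynamic-programming) solution of the Newton step of \cite{dunn1989efficient} from the single-agent case to the game, writing $A^{-1}$ through a backward recursion of exactly the same block form as \eqref{eq:TildeGammaBackprop}--\eqref{eq:TildeSDef} but built from the exact second-order expansion of each player's cost-to-go rather than the truncated one in \eqref{eq:Qquad}. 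The block layout of $\tilde F_k$ in \eqref{eq:TildeFDef}, which stacks the $u_n$-gradient of player $n$'s $Q$-function, is precisely what makes the stacked stationarity conditions reassemble the rows of $\frac{\partial \J}{\partial u}$; checking that the two recursions assemble the same Jacobian is the combinatorial heart of the argument. Lemma~\ref{lem:quadSol}, together with the invertibility of $\frac{\partial \J(u^\star)}{\partial u}$, guarantees that $\tilde F_k$ and $F_k$ (hence $\tilde A$ and $A$) are invertible throughout a fixed neighborhood of $u^\star$.

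The comparison then rests on two facts. First, $\tilde A$ and $A$ differ only through the cubic-and-higher terms discarded in \eqref{eq:Qquad}; these enter the recursion through tensor corrections such as $\tilde D_{n,k}$ in \eqref{eq:TildeDDef}, which are weighted by the first-order (gradient) blocks of the value matrices. At $u^\star$ one has $\J(u^\star)=0$, so these gradient blocks vanish and the two recursions coincide; by Lipschitz continuity of the second derivatives of $f_k$ and $c_{n,k}$ this yields $\norm{\tilde A - A} = O(\norm{\bar u - u^\star})$. Second, writing the solution difference in factored form,
\[
\tilde u - \hat u \;=\; \tilde A^{-1}\,(\tilde A - A)\,A^{-1}\,\J(\bar u),
\]
and noting that $A^{-1}\J(\bar u) = -(\hat u - \bar u) = O(\norm{\bar u - u^\star})$ is the Newton step while $\tilde A^{-1}$ is uniformly bounded, gives $\norm{\tilde u - \hat u} = O(\norm{\bar u - u^\star}^2)$, as required.

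I expect the principal obstacle to be controlling the backward recursion in the absence of the symmetry and definiteness available in the single-agent case. For optimal control the stage matrix is a symmetric Hessian that is positive definite near a strict minimizer, giving uniformly bounded inverses and a self-adjoint value recursion; here $\tilde F_k$ is an unsymmetric stacked-gradient matrix whose good behavior rests only on invertibility, and each player carries a separate value matrix $\tilde S_{n,k}$. The delicate points are (i) establishing uniform boundedness and Lipschitz dependence of $F_k^{-1}$ and of all $\tilde S_{n,k}$ on $\bar u$ over a small closed neighborhood of $u^\star$---so that the per-stage error $O(\norm{\bar u - u^\star})$ does not amplify as it accumulates backward in time---and (ii) confirming that the stacked per-player conditions assemble \emph{exactly} into $\frac{\partial \J}{\partial u}$ rather than a nearby matrix. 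Both should follow from the strict-equilibrium and invertibility hypotheses by a continuity and compactness argument, but because the recursion is not self-adjoint the estimates must be carried out in operator norm without recourse to any definiteness.
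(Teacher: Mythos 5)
Your overall architecture matches the paper's: interpret the Newton step for $\J(u)=0$ as a dynamic game solved by a backward recursion (the game extension of Dunn's recursive Newton solve), show the DDP step is $O(\epsilon^2)$-close to the Newton step with $\epsilon = \norm{\bar u - u^\star}$, and finish with the triangle inequality. However, the step that does all the quantitative work in your proposal --- the identity $\tilde u - \hat u = \tilde A^{-1}(\tilde A - A)A^{-1}\J(\bar u)$ --- rests on the claim that the DDP backward/forward passes implicitly solve a linear system $\tilde A\,\delta u = -\J(\bar u)$ with the \emph{same} right-hand side as Newton. That claim is false, and this is exactly what makes the comparison delicate. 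The DDP update is characterized stage-wise by $\tilde F_k\,\delta u_{:,k} + \tilde P_k\,\delta x_k + \tilde H_k = 0$, as in \eqref{eq:ddp_necessary}, rolled forward through the \emph{nonlinear} dynamics, and the affine data $\tilde H_k,\tilde s_k$ are produced by the same truncated recursion that produces $\tilde F_k$: the approximate value-gradient blocks $\tilde S^{1x}_{n,k+1}$, which differ from the exact adjoints $\Omega_{n,k+1}$, contaminate the right-hand side as well as the matrix, and the two contaminations are entangled through the recursion. So DDP is not Newton with a perturbed matrix acting on the same residual; both the ``matrix'' and the ``residual'' are perturbed. Your factored formula therefore has no justification, and the bound $\norm{\tilde A - A}=O(\epsilon)$ alone cannot deliver $\norm{\tilde u - \hat u}=O(\epsilon^2)$: an $O(\epsilon)$ perturbation of the affine data would generically give only an $O(\epsilon)$ gap between the two steps, which is useless for quadratic convergence.

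What is actually needed --- and what the paper proves by backward induction (Lemma~\ref{lem:matrixClose}) followed by forward induction (Lemma~\ref{lem:solClose}) --- is a block-structured closeness statement: the feedback parts may differ at first order ($\tilde F_k = F_k + O(\epsilon)$, $\tilde K_k = K_k + O(\epsilon)$), but the affine parts must be shown to agree to second order ($\tilde H_k = H_k + O(\epsilon^2)$, $\tilde s_k = s_k + O(\epsilon^2)$), and this relies on $H_k$ and $s_k$ themselves being $O(\epsilon)$ because $\J(\bar u)=O(\epsilon)$ near the equilibrium. Only then does the forward pass give $\delta u^D_{:,k}-\delta u^N_{:,k} = O(\epsilon)\,\delta x^N_k + O(\epsilon^2) = O(\epsilon^2)$, since the states are themselves $O(\epsilon)$. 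Your proposal also misstates the mechanism behind the matrix closeness: the tensor corrections $D_{n,k}=\sum_l \Omega^l_{n,k+1}G^l_k$ and $\tilde D_{n,k}=\sum_l \tilde S^{1x^l}_{n,k+1}G^l_k$ are weighted by \emph{state}-gradients of the cost-to-go, which do \emph{not} vanish at $u^\star$ (only the control-gradients $\partial J_n/\partial u_{n,:}$ do); the recursions do not coincide at $u^\star$ by vanishing of these weights. Rather, one must prove inductively that $\tilde S^{1x}_{n,k+1}=\Omega_{n,k+1}+O(\epsilon)$, using $s_k=O(\epsilon)$ and $\Gamma^{1u}_{n,k}=O(\epsilon)$, which is again the content of Lemma~\ref{lem:matrixClose}. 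In short, the inductive, block-by-block estimate you hoped to bypass with a one-line matrix-perturbation identity is not a technicality; it is the heart of the proof.
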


The proof
depends on several intermediate
results. Subsection~\ref{sec:newtonGame} reformulates Newton's method
for the necessary conditions, \eqref{problem:necessary}, as the
solution to a dynamic game. Subsection~\ref{sec:close} demonstrates
that the solutions of the dynamic games solved by Newton's method and
DDP close. Then, Subsection~\ref{sec:proof} finishes the convergence
proof by demonstrating that the DDP solution is sufficiently close to
the Newton solution to inherit its convergence property.

Throughout this section we will assume that both Newton's method and
DDP are starting from the same initial action trajectory, $\bar
u$. Let $u^N$ and $u^D$ be the updated action trajectories of Newton's
method and DDP, respectively. Define update steps, $\delta u^N$ and
$\delta u^D$, by:
\begin{equation}
  u^N = \bar u + \delta u^N \quad u^D = \bar u + \delta u^D.
\end{equation}
Additionally, we will assume that $u^\star$ is a strict local
equilibrium with $\frac{\partial \J(u^*)}{\partial u}$ invertible.

\subsection{Dynamic Programming Solution for the Newton Step}
\label{sec:newtonGame}
The proof of Theorem~\ref{thm:main} proceeds by demonstrating that the solutions from DDP and
Newton's method are sufficiently close that DDP inherits the quadratic
convergence of Newton's method.
To show closeness, we demonstrate that the Newton step can be
interpreted as the solution to a dynamic game. This dynamic game has a
recursive solution that is structurally similar to the recursions from
DDP. This subsection derives the corresponding game and solution.

The Newton step for solving \eqref{problem:necessary} is given by:
\begin{align}
  \label{eq:direct_newton}
  \frac{\partial \mathcal{J}(\bar u)}{\partial u} \delta u^N = - \mathcal{J}(\bar u).
\end{align}

This rule leads to a quadratic convergence to a root in
\eqref{problem:necessary} whenever $\nabla_u
\mathcal{J}(u)$ is locally Lipschitz and invertible
\cite{nocedal2006numerical}. The next two lemmas give game-theoretic
interpretations of the Newton step.

\begin{lemma}
  {\it
    Solving \eqref{eq:direct_newton} is equivalent to solving the
    quadratic game defined by:
    \begin{align}
      \label{eq:direct_game}
      \min_{\delta u_{n,:}} J_n(\bar u) +
      \frac{\partial J_n(\bar u)}{\partial u} \delta u
      + \frac{1}{2} \delta u^T \frac{\partial^2 J_n(\bar u)}{\partial u^2} \delta u
    \end{align}
  }
\end{lemma}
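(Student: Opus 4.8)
The plan is to show that the Nash stationarity conditions of the static quadratic game \eqref{eq:direct_game} coincide, block by block, with the linear system defining the Newton step \eqref{eq:direct_newton}. First I would recall that a Nash equilibrium of the quadratic game requires each player $n$ to be stationary in its own increment $\delta u_{n,:}$ while the other players' increments $\delta u_{-n,:}$ are held fixed. Since player $n$'s objective in \eqref{eq:direct_game} is quadratic in $\delta u$, its best response is characterized by the first-order stationarity condition in $\delta u_{n,:}$, and the equilibrium is the simultaneous solution of these $N$ conditions.

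The core computation is to carry out this differentiation. The constant term $J_n(\bar u)$ drops out; differentiating the linear term $\frac{\partial J_n(\bar u)}{\partial u}\delta u$ with respect to $\delta u_{n,:}$ extracts the $u_{n,:}$-block of the gradient, namely $\frac{\partial J_n(\bar u)}{\partial u_{n,:}}^\top$; and differentiating the quadratic term $\frac{1}{2}\delta u^\top \frac{\partial^2 J_n(\bar u)}{\partial u^2}\delta u$ yields the $u_{n,:}$-block row of the Hessian applied to the full increment, $\frac{\partial^2 J_n(\bar u)}{\partial u_{n,:}\partial u}\,\delta u$. Thus player $n$'s stationarity condition reads
\[
\frac{\partial J_n(\bar u)}{\partial u_{n,:}}^\top + \frac{\partial^2 J_n(\bar u)}{\partial u_{n,:}\partial u}\,\delta u = 0.
\]

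Next I would stack these $N$ conditions. By the definition of $\J$ in \eqref{eq:JFun}, the vector $\J(\bar u)$ is precisely the concatenation of the blocks $\frac{\partial J_n(\bar u)}{\partial u_{n,:}}$, and its Jacobian $\frac{\partial \J(\bar u)}{\partial u}$ has $n$-th block row equal to $\frac{\partial^2 J_n(\bar u)}{\partial u_{n,:}\partial u}$. Hence stacking the per-player stationarity conditions produces exactly $\J(\bar u) + \frac{\partial \J(\bar u)}{\partial u}\delta u = 0$, which is the Newton system \eqref{eq:direct_newton}. Because $\frac{\partial \J(\bar u)}{\partial u}$ is invertible near the equilibrium, this system has the unique solution $\delta u^N$, so the unique Nash point of the quadratic game equals the Newton step.

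I expect the main obstacle to be bookkeeping rather than anything deep: one must be careful that differentiating player $n$'s cost in only its own variable $\delta u_{n,:}$ produces the $u_{n,:}$-block \emph{row} of the Hessian acting on the \emph{entire} increment $\delta u$, not merely the diagonal block, and that this block-row structure is exactly what the Jacobian of the stacked map $\J$ supplies. Matching the transpose and row-versus-column conventions in the gradient terms is the only other point demanding care.
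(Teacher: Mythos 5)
Your proposal is correct and follows the same route as the paper's proof: differentiate each player's quadratic objective in its own block $\delta u_{n,:}$, set the result to zero, and stack the $N$ stationarity conditions to recover \eqref{eq:direct_newton}. The paper states this in two sentences; your version merely makes explicit the block-row identification of $\frac{\partial \J(\bar u)}{\partial u}$ with the per-player Hessian rows $\frac{\partial^2 J_n(\bar u)}{\partial u_{n,:}\partial u}$, which is exactly the bookkeeping the paper leaves implicit.
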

\begin{proof}
  Under the strict local equilibrium assumptions,
  \eqref{eq:direct_game} has a unique solution which is found by
  differentiating with respect to $\delta u_{n,:}$ and setting the
  result to $0$. Stacking these equations leads precisely to
  \eqref{eq:direct_newton}.
\end{proof}

The next lemma shows that \eqref{eq:direct_game} can be expressed as a
quadratic dynamic game.
\if\MODE\ARXIV{
  It is proved in Appendix~\ref{app:newton_game}.
}
\fi

\begin{lemma}
  \label{lem:newton_game}
  {\it
    The quadratic game defined in \eqref{eq:direct_game} is equivalent
    to the dynamic game defined by:
    \begin{subequations}
      \label{eq:newton_dp}
      \begin{align}
        \label{eq:newton_dp_objective}
        & \min_{u_{n,:}}\frac{1}{2} \sum_{k=0}^T \left(
          \begin{bmatrix}
            1 \\
            \delta x_k \\
            \delta u_{:,k}
          \end{bmatrix}^T
        M_{n, k}
        \begin{bmatrix}
          1 \\
          \delta x_k \\
          \delta u_{:,k}
        \end{bmatrix}
        + M^{1x}_{n,k} \Delta x_k \right)
      \end{align}
      subject to
      \begin{align}
        & \quad \quad \delta x_0 = 0 \\
        \label{eq:newton_dp_init1}
        & \quad \quad \Delta x_0 = 0 \\
        \label{eq:newton_dp_dynamics0}
        & \quad \quad \delta x_{k+1} = A_k \delta{x}_k + B_k \delta u_{:,k} \\
        \label{eq:newton_dp_dynamics1}
        & \quad \quad \Delta x_{k+1} = A_k \Delta x_k + R_k(\delta x_k, \delta u_{:,k})\\
        & \quad \quad k = 0, 1, \ldots, T
      \end{align}
    \end{subequations}
  }
\end{lemma}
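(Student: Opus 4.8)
The plan is to show that the two games have identical objective functions when both are regarded as quadratic functions of the stacked perturbation $\delta u$, so that their best responses---and hence their Nash equilibria---coincide. Since the objective of \eqref{eq:direct_game} is, by inspection, the second-order Taylor expansion of $J_n$ about $\bar u$ in the perturbation $\delta u$, it suffices to expand $J_n(\bar u + \delta u)$ to second order in $\delta u$ and match it term-by-term with the dynamic-game objective \eqref{eq:newton_dp_objective} subject to its constraints.

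First I would expand the state trajectory. Perturbing the inputs from $\bar u$ by $\delta u$ and writing $x_k = \bar x_k + (x_k - \bar x_k)$, I claim that to second order $x_k - \bar x_k = \delta x_k + \Delta x_k + O(\norm{\delta u}^3)$, where $\delta x_k$ is the first-order part and $\Delta x_k$ the second-order part. This is established by induction on $k$: substitute the expansion into $x_{k+1} = f_k(x_k, u_{:,k})$, Taylor-expand $f_k$ about $(\bar x_k, \bar u_{:,k})$ using $A_k$, $B_k$ and the Hessians $G_k^l$, and separate by order. The first-order terms reproduce the linearized dynamics \eqref{eq:newton_dp_dynamics0}, while the second-order terms reproduce \eqref{eq:newton_dp_dynamics1}, in which $R_k(\delta x_k, \delta u_{:,k})$ collects exactly the quadratic contribution of $f_k$ and $A_k \Delta x_k$ propagates the accumulated second-order error. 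The base case $\delta x_0 = \Delta x_0 = 0$ holds because $x_0$ is fixed.

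Next I would substitute this state expansion into $J_n(\bar u + \delta u) = \sum_{k=0}^T c_{n,k}(x_k, u_{:,k})$ and expand each stage cost to second order. After separating by order in $\delta u$, the zeroth-, first-, and pure-second-order contributions in $(\delta x_k, \delta u_{:,k})$ assemble exactly into the stagewise quadratic form built from $M_{n,k}$ in \eqref{eq:newton_dp_objective}, with $M_{n,k}$ as defined in \eqref{eq:cost_devs}. The one remaining second-order contribution arises from pairing the $x$-gradient of $c_{n,k}$ with the second-order state correction $\Delta x_k$; since $M_{n,k}^{1x} = \partial c_{n,k}/\partial x_k$, this is precisely the extra term $M_{n,k}^{1x}\Delta x_k$ of \eqref{eq:newton_dp_objective}. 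Summing over $k$ shows that the second-order expansion of $J_n$ equals the dynamic-game objective subject to \eqref{eq:newton_dp_init1}--\eqref{eq:newton_dp_dynamics1}, and since this holds for each player $n$ the two games coincide objective-by-objective and are therefore equivalent.

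The main obstacle will be the bookkeeping in the cost expansion: verifying that the \emph{only} surviving cross-order term is the $\partial c_{n,k}/\partial x_k \cdot \Delta x_k$ contribution---every product of $\Delta x_k$ with a first-order quantity being $O(\norm{\delta u}^3)$ and hence discarded---and tracking the factor conventions built into $R_k$ and the leading $\tfrac{1}{2}$ so that the correction matches $M_{n,k}^{1x}\Delta x_k$ exactly rather than merely up to a constant. The induction of the first step is then routine once the separation by order is in place.
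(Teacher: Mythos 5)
Your overall strategy mirrors the paper's own proof: interpret $\delta x_k$ and $\Delta x_k$ as the first- and second-order parts of the state perturbation induced by $\delta u$, establish the recursions \eqref{eq:newton_dp_dynamics0}--\eqref{eq:newton_dp_dynamics1} by induction, and then match the dynamic-game objective against the second-order expansion of $J_n$, which is the objective of \eqref{eq:direct_game}. However, there is a concrete factor-of-two inconsistency at the crux of your argument, precisely at the point you flag as "bookkeeping." You define $\Delta x_k$ to be \emph{the second-order part} of $x_k - \bar x_k$, i.e.\ the Taylor coefficient $e_k$ in $x_k - \bar x_k = \delta x_k + e_k + O(\norm{\delta u}^3)$. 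But $R_k$ in \eqref{eq:approximations} stacks the \emph{unhalved} quadratic forms $[\delta x_k;\,\delta u_{:,k}]^\top G_k^l\,[\delta x_k;\,\delta u_{:,k}]$, so separating orders in $x_{k+1}=f_k(x_k,u_{:,k})$ actually yields $e_{k+1} = A_k e_k + \tfrac12 R_k(\delta x_k,\delta u_{:,k})$, which is \emph{not} \eqref{eq:newton_dp_dynamics1}. Likewise, the cross term required in the cost expansion is $\frac{\partial c_{n,k}}{\partial x_k}\, e_k$, whereas \eqref{eq:newton_dp_objective} supplies $\tfrac12 M_{n,k}^{1x}\Delta x_k$ (the leading $\tfrac12$ multiplies the entire parenthesis), which under your identification is only $\tfrac12 \frac{\partial c_{n,k}}{\partial x_k}\, e_k$. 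So with your definition both of your key claims fail by a factor of two; the two errors compensate---which is why the lemma itself remains true---but your proof as written does not establish it.

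The fix is the identification the paper makes in \eqref{eq:newton_dp_states}: take $\Delta x_k^l = \sum_{i,j}\delta u_{:,i}^\top \frac{\partial^2 x_k^l}{\partial u_{:,i}\partial u_{:,j}}\big|_{\bar x,\bar u}\,\delta u_{:,j}$, the full Hessian quadratic form, which is \emph{twice} your $e_k$. With this convention the chain rule gives $\Delta x_{k+1}^l = [\delta x_k;\,\delta u_{:,k}]^\top G_k^l\,[\delta x_k;\,\delta u_{:,k}] + \sum_p A_k^{lp}\Delta x_k^p$ with no stray halves, i.e.\ exactly \eqref{eq:newton_dp_dynamics1}, and the objective contribution becomes $\tfrac12 M_{n,k}^{1x}\Delta x_k = \frac{\partial c_{n,k}}{\partial x_k}\, e_k$, which is exactly the needed cross term. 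Once this convention is corrected, your induction and term-by-term matching go through and essentially coincide with the paper's appendix proof, which performs the same computation via explicit chain-rule formulas for $\frac{\partial J_n}{\partial u_{:,i}}$ and $\frac{\partial^2 J_n}{\partial u_{:,i}\partial u_{:,j}}$ and then regroups the sums stage by stage.
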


Note that the states of the dynamic game are given by $\delta x_k$ and
$\Delta x_k$ as
\begin{subequations}
  \label{eq:newton_dp_states}
  \begin{align}
      \label{eq:newton_dp_states0}
      & \delta x_k = \sum_{i = 0}^{T} \frac{\partial x_k}{\partial u_{:,i}} \Big|_{\bar x, \bar{u}} \delta u_{:,i} \\
      \label{eq:newton_dp_states1}
      & \Delta x_k^l = \sum_{i = 0}^{T} \sum_{j = 0}^{T} \delta u_{:,i}^\top {\frac{\partial^2 x_k^l}{\partial u_{:,i} \partial u_{:,j}}} \Big|_{\bar x,\bar{u}} \delta u_{:,j}, \  l = 1, 2, \ldots, n_x
  \end{align}
\end{subequations}

\if\MODE\ACC{
\paragraph*{Proof Sketch}
We only give an outline for the proof here due to space constraints. The states dynamics \eqref{eq:newton_dp_dynamics0}, \eqref{eq:newton_dp_dynamics1} can be proven inductively from the definitions \eqref{eq:newton_dp_states} and original system dynamics quadratic approximation \label{eq:dynQuad}.

  To prove that \eqref{eq:newton_dp_objective} is equivalent to \eqref{eq:direct_game}, we'll need explicit expressions for each of the associated derivative $\frac{\partial J_n(u)}{\partial u_{:,i}}$ and $\frac{\partial^2 J_n(u)}{\partial u_{:,i} \partial u_{:,j}}$ computed from \eqref{eq:cost} and \eqref{eq:dynamics} in terms of derivatives of $c_{n,k}(x_k, u_k)$. The fact that current cost or dynamics does not depend on future inputs is utilized in the derivation. Expression in the form of summation over time $k$ for each additional term in \eqref{eq:direct_game} is then computed via summing up the corresponding first or second order derivatives of $J_n(u)$ w.r.t. $u_i$ and/or $u_j$. It can be easily verified that \eqref{eq:newton_dp_objective} is the same as \eqref{eq:direct_game} in its summation over time form.
\hfill\QED
}
\fi

It follows that the equilibrium solution of this dynamic
game is characterize by the following Bellman recursion:
\begin{subequations}
  \label{eq:newtonRecursion}
  \begin{align}
    V_{n,T+1}(\delta x_{T+1},\Delta x_{T+1}) &= 0 \\
    \nonumber
    Q_{n,k}(\delta x_k,\Delta x_k,\delta u_{:,k})
                                             &=
    \\
                                             &
                                               \hspace{-8em}
                                               \frac{1}{2}
                                               \left(
                                               \begin{bmatrix}
                                                 1 \\
                                                 \delta x_k \\
                                                 \delta u_{:,k}
                                               \end{bmatrix}^\top
    M_{n,k}
    \begin{bmatrix}
      1 \\
      \delta x_k \\
      \delta u_{:,k}
    \end{bmatrix}
    +M^{1x}_{n,k} \Delta x_k \right) \\
                                             &
                                               \hspace{-8em}
                                               + V_{n,k+1}(A_k\delta x_k +B_k
                                               \delta u_{:,k},
                                               A_k \Delta x_k + R_k(\delta x_k,\delta u_{:,k})) \\
    \label{eq:newtonQgame}
    V_{n,k}(\delta x_k,\Delta x_k) &= \min_{
                                     \delta u_{n,k}} Q_{n,k}(\delta
                                     x_k,\Delta x_k,\delta u_{:,k}).
  \end{align}
\end{subequations}
Note that \eqref{eq:newtonQgame} defines a static quadratic game and $V_{n,k}(\delta x_k, \Delta x_k)$ is found by solving the game and substituting the solution back to $Q_{n,k}(\delta x_k, \Delta x_k, \delta u_{:,k})$.

The next lemma describes an explicit solution to the backward
recursion \eqref{eq:newtonRecursion}. The key step in the convergence
proof is showing that the matrices used in this recursion are
appropriately close to the matrices used in DDP.

\begin{lemma}
  \label{lem:newtonMatrices}
  {\it
    The functions $V_{n,k}$ and $Q_{n,k}$ can be expressed as
    \begin{subequations}
    \label{eq:newton_dp_solution_state_action_val_fun}
      \begin{align}
      \label{eq:newton_dp_solution_state_val_fun}
        & V_{n,k}(\delta x_k, \Delta x_k) = \frac{1}{2} \left(
          \begin{bmatrix}
            1 \\
            \delta x_k
          \end{bmatrix}^T S_{n,k}
        \begin{bmatrix}
          1 \\
          \delta x_k
        \end{bmatrix} + \Omega_{n,k} \Delta x_k \right)
        \\
        & \hspace{-.8em} Q_{n,k}(\delta x_k, \Delta x_k, \delta u_{:,k}) = \frac{1}{2} \left(
          \begin{bmatrix}
            1 \\
            \delta x_k \\
            \delta u_{:,k}
          \end{bmatrix}^T \Gamma_{n,k}
        \begin{bmatrix}
          1 \\
          \delta x_k \\
          \delta u_{:,k}
        \end{bmatrix} + \Omega_{n,k} \Delta x_k \right)
      \end{align}
    \end{subequations}
    where the matrices $S_{n,k}$, $\Gamma_{n,k}$, and $\Omega_{n,k}$
    are defined recusrively by $S_{n,T+1}=0$, $\Omega_{n,T+1} = 0$,
    and
    \begin{subequations}
      \label{eq:newton_dp_matrices}
      \begin{align}
        \label{eq:newton_dp_solution_omega1}
        & \Omega_{n,k} = M_{n,k}^{1x} + \Omega_{n,k+1} A_k \\
        \label{eq:newton_dp_solution_D}
        & D_{n,k} = \sum_{l=1}^n \Omega_{n,k+1}^l G_k^l  \\
        \label{eq:GammaDef}
        & \Gamma_{n,k} = M_{n,k} \nonumber \\
        & +
          \begin{bmatrix}
            S_{n, k+1}^{11} & S_{n,k+1}^{1x}A_k & S_{n,k+1}^{1x}B_k \\
            A_k^\top S_{n,k+1}^{x1} & A_k^\top S_{n,k+1}^{xx}A_k + D_k^{xx} & A_k^\top S_{n,k+1}^{xx} B_k + D_k^{xu} \\
            B_k^\top  S_{n,k+1}^{x1}  & B_k^\top  S_{n,k+1}^{xx}A_k + D_k^{ux} & B_k^\top S_{n,k+1}^{xx} B_k + D_k^{uu}
          \end{bmatrix} \\
        & \quad \ =
          \begin{bmatrix}
            \Gamma_{n, k}^{11} & \Gamma_{n, k}^{1x} & \Gamma_{n, k}^{1u_1} & \Gamma_{n, k}^{1u_2} & \cdots & \Gamma_{n, k}^{1u_N} \\
            \Gamma_{n, k}^{x1} & \Gamma_{n, k}^{xx} & \Gamma_{n, k}^{xu_1} & \Gamma_{n, k}^{xu_2} & \cdots & \Gamma_{n, k}^{xu_N} \\
            \Gamma_{n, k}^{u_11} & \Gamma_{n, k}^{u_1x} & \Gamma_{n, k}^{u_1u_1} & \Gamma_{n, k}^{u_1u_2} & \cdots & \Gamma_{n, k}^{u_1u_N} \\
            \Gamma_{n, k}^{u_21} & \Gamma_{n, k}^{u_2x} & \Gamma_{n, k}^{u_2u_1} & \Gamma_{n, k}^{u_2u_2} & \cdots & \Gamma_{n, k}^{u_2u_N} \\
            \vdots & \vdots & \vdots & \vdots & \ddots & \vdots \\
            \Gamma_{n, k}^{u_N1} & \Gamma_{n, k}^{u_Nx} & \Gamma_{n, k}^{u_Nu_1} & \Gamma_{n, k}^{u_Nu_2} & \cdots & \Gamma_{n, k}^{u_Nu_N}
          \end{bmatrix}
      \end{align}
      \begin{align}
        \label{eq:newton_dp_invert}
        & F_k =
          \begin{bmatrix}
            \Gamma_{1k}^{u_1u} \\
            \Gamma_{2k}^{u_2u} \\
            \vdots \\
            \Gamma_{Nk}^{u_Nu}
          \end{bmatrix} =
        \begin{bmatrix}
          \Gamma_{1k}^{u_1u_1} & \Gamma_{1k}^{u_1u_2} & \cdots & \Gamma_{1k}^{u_1u_N} \\
          \Gamma_{2k}^{u_2u_1} & \Gamma_{2k}^{u_2u_2} & \cdots & \Gamma_{2k}^{u_2u_N} \\
          \vdots & \vdots & \ddots & \vdots \\
          \Gamma_{Nk}^{u_Nu_1} & \Gamma_{Nk}^{u_Nu_2} & \cdots & \Gamma_{Nk}^{u_Nu_N}
        \end{bmatrix} \\
        & P_k =
          \begin{bmatrix}
            \Gamma_{1k}^{u_1x} \\
            \Gamma_{2k}^{u_2x} \\
            \vdots \\
            \Gamma_{Nk}^{u_Nx}
          \end{bmatrix}, \quad
        H_k =
        \begin{bmatrix}
          \Gamma_{1k}^{u_11} \\
          \Gamma_{2k}^{u_21} \\
          \vdots \\
          \Gamma_{Nk}^{u_N1}
        \end{bmatrix} \\
        \label{eq:newton_strategy}
        & s_k = - F_k^{-1} H_k, \quad K_k = - F_k^{-1} P_k \\
        \label{eq:newton_dp_solution_S}
        & S_{n, k} =
          \begin{bmatrix}
            1 & 0 & s_k^\top \\
            0 & I & K_k^\top
          \end{bmatrix} \Gamma_{n, k}
                           \begin{bmatrix}
                             1 & 0 \\
                             0 & I \\
                             s_k & K_k
                           \end{bmatrix}
      \end{align}
    \end{subequations}
    for $k = T,T-1,\ldots,0$.
  }
\end{lemma}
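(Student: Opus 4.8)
The plan is to prove the claim by backward induction on $k$, following the Bellman recursion \eqref{eq:newtonRecursion}. The base case is immediate: at $k=T+1$ we have $V_{n,T+1}=0$, which is the stated form with $S_{n,T+1}=0$ and $\Omega_{n,T+1}=0$. For the inductive step I would assume $V_{n,k+1}$ has the claimed form with matrices $S_{n,k+1}$ and $\Omega_{n,k+1}$, and then show that both $Q_{n,k}$ and $V_{n,k}$ inherit the form, reading off the recursions in \eqref{eq:newton_dp_matrices} along the way.

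First I would substitute the linearized dynamics \eqref{eq:newton_dp_dynamics0} and the second-order dynamics \eqref{eq:newton_dp_dynamics1} into the inductive expression for $V_{n,k+1}$ and add the stage cost to assemble $Q_{n,k}$. The structural feature to keep track of is that $V_{n,k+1}$ is \emph{quadratic} in $\delta x_{k+1}$ but only \emph{linear} in $\Delta x_{k+1}$. Expanding the quadratic part, with $\delta x_{k+1}=A_k\delta x_k+B_k\delta u_{:,k}$, produces exactly the $S_{n,k+1}$-dependent blocks of $\Gamma_{n,k}$ in \eqref{eq:GammaDef}, and adding $M_{n,k}$ accounts for the stage cost. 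The linear term $\Omega_{n,k+1}\Delta x_{k+1}=\Omega_{n,k+1}(A_k\Delta x_k+R_k)$ splits into a piece $\Omega_{n,k+1}A_k\Delta x_k$, which combines with the stage term $M_{n,k}^{1x}\Delta x_k$ to give $\Omega_{n,k}=M_{n,k}^{1x}+\Omega_{n,k+1}A_k$ as in \eqref{eq:newton_dp_solution_omega1}, and a piece $\Omega_{n,k+1}R_k$.

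The step I expect to be the main obstacle is correctly absorbing $\Omega_{n,k+1}R_k(\delta x_k,\delta u_{:,k})$. Although $\Delta x$ enters the value function only linearly, its dynamics involve the quadratic remainder $R_k$, whose entries are quadratic forms in $(\delta x_k,\delta u_{:,k})$ built from the Hessians $G_k^l$. Contracting these against the constant row vector $\Omega_{n,k+1}$ collapses them into a single quadratic form with matrix $D_{n,k}=\sum_l \Omega_{n,k+1}^l G_k^l$, matching \eqref{eq:newton_dp_solution_D}. The care required here is in verifying that this second-order state effect augments exactly the $xx$, $xu$, $ux$, $uu$ blocks of $\Gamma_{n,k}$ and contributes nowhere else; this is the game-theoretic analogue of the second-order-dynamics bookkeeping in DDP, and it is precisely what distinguishes $D_{n,k}$ (built from $\Omega_{n,k+1}$) from its DDP counterpart $\tilde D_{n,k}$ (built from $\tilde S_{n,k+1}^{1x}$), a distinction the later closeness argument will exploit.

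Once $Q_{n,k}$ is in the stated form, the remaining steps mirror the proof of Lemma~\ref{lem:ddp_matrices}. The crucial simplification is that the $\Omega_{n,k}\Delta x_k$ term does not depend on $\delta u_{:,k}$, so it behaves as a constant in the static game \eqref{eq:newtonQgame} and the equilibrium is determined entirely by $\Gamma_{n,k}$. Differentiating $Q_{n,k}$ in each $\delta u_{n,k}$ and stacking the stationarity conditions gives the linear system $F_k\delta u_{:,k}+P_k\delta x_k+H_k=0$, with $F_k,P_k,H_k$ read off from the blocks of $\Gamma_{n,k}$; invertibility of $F_k$ near the equilibrium follows from the non-degeneracy assumption, yielding the affine law $\delta u_{:,k}=K_k\delta x_k+s_k$ of \eqref{eq:newton_strategy}. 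Substituting this law back into $Q_{n,k}$ produces $S_{n,k}$ as in \eqref{eq:newton_dp_solution_S}, while the $\Omega_{n,k}\Delta x_k$ term passes through unchanged, which closes the induction.
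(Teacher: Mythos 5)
Your proposal is correct and follows essentially the same route as the paper's proof: step back in time by substituting the dynamics \eqref{eq:newton_dp_dynamics0}--\eqref{eq:newton_dp_dynamics1} into the inductive form of $V_{n,k+1}$, collapse $\Omega_{n,k+1}R_k$ into the quadratic form $D_{n,k}=\sum_l \Omega_{n,k+1}^l G_k^l$, combine $M_{n,k}^{1x}+\Omega_{n,k+1}A_k$ into $\Omega_{n,k}$, and then solve the static game exactly as in Lemma~\ref{lem:ddp_matrices} since the $\Delta x_k$ term is uncoupled from $\delta u_{:,k}$. Your explicit framing as backward induction and your emphasis on the $D_{n,k}$ versus $\tilde D_{n,k}$ distinction match the paper's argument and the discussion following the lemma.
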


From this lemma, we can see that the matrices used in the recursions
for both DDP and Newton's method are very similar in
structure. Indeed, the iterations are identical aside from the
definitions of the $D_{n,k}$ and $\tilde D_{n,k}$ matrices.

\begin{proof}
  The proof is very similar to the proof for Lemma \ref{lem:ddp_matrices}. Solving the equilibrium strategy and $V_{n,k}(\delta x_k, \Delta x_k)$ based on $Q_{n,k}(\delta x_k, \Delta x_k, \delta u_{:,k})$ is the same as how we arrived at \eqref{eq:ddp_necessary} and \eqref{eq:TildeSDef}, since the extra terms of $\Delta x_k$ are not coupled with $\delta u_{:,k}$ and other terms are of the exact same form. 
  The stepping back in time of $Q_k(\delta x_k, \Delta x_k, \delta u_{:,k})$ is acheived by substituting \eqref{eq:newton_dp_dynamics0} and \eqref{eq:newton_dp_dynamics1} into \eqref{eq:newton_dp_solution_state_val_fun}, which is slightly different because of the extra terms related to $\Delta x_k$.
  \begin{subequations}
    \label{eq:newton_dp_backprop}
    \begin{align}
      & Q_{n,k}(\delta x_k, \Delta x_k, \delta u_{:,k}) \\
      &= \frac{1}{2} \left(
        \begin{bmatrix}
          1 \\
          \delta x_k \\
          \delta u_{:,k}
        \end{bmatrix}^T
      M_{n,k}
      \begin{bmatrix}
        1 \\
        \delta x_k \\
        \delta u_{:,k}
      \end{bmatrix} + M^{1k}_{n,k} \Delta x_k \right) \nonumber \\
      & \quad + V_{n,k+1}(A_k \delta{x}_k + B_k  \delta{u}_k, A_k \Delta x_{k}  + R_k(\delta x_k, \delta u_{:,k})) \\
      &= \frac{1}{2} \left(
        \begin{bmatrix}
          1 \\
          \delta x_k \\
          \delta u_{:,k}
        \end{bmatrix}^T
      M_{n,k}
      \begin{bmatrix}
        1 \\
        \delta x_k \\
        \delta u_{:,k}
      \end{bmatrix} \right) + \nonumber \\
      & \frac{1}{2}
        \begin{bmatrix}
          1 \\
          \delta{x}_k \\
          \delta u_{:,k}
        \end{bmatrix}^T
      \begin{bmatrix}
        S^{11}_{n,k+1} & S^{1x}_{n,k+1} A_k & S^{1x}_{n,k+1} B_k \\
        A^\top_k S^{x1}_{n,k+1} & A^\top_k S^{xx}_{n,k+1} A_k & A^\top_k S^{xx}_{n,k+1} B_k \\
        B^\top_k S^{x1}_{n,k+1} & B^\top_k S^{xx}_{n,k+1} A_k & B^\top_k S^{xx}_{n,k+1} B_k
      \end{bmatrix}
                                                                \begin{bmatrix}
                                                                  1 \\
                                                                  \delta{x}_k \\
                                                                  \delta u_{:,k}
                                                                \end{bmatrix} \nonumber \\
      & \quad + \frac{1}{2} \left( (M^{1k}_{n,k} + \Omega_{n,k+1} A_k) \Delta x_{k} \right) \nonumber \\
      & \quad + \frac{1}{2} \left(
        \begin{bmatrix}
          \delta x_k \\
          \delta u_{:,k}
        \end{bmatrix}^T
      D_{n,k}
      \begin{bmatrix}
        \delta x_k \\
        \delta u_{:,k}
      \end{bmatrix} \right)
    \end{align}
  \end{subequations}
  So (\ref{eq:newton_dp_solution_omega1}), (\ref{eq:newton_dp_solution_D}) and (\ref{eq:GammaDef}) are true.
\end{proof}

\subsection{Closeness Lemmas}
\label{sec:close}
This subsection gives a few lemmas which imply that the Newton step,
$\delta u^N$, and the DDP step, $\delta u^D$, are close. For the rest
of the section, we set $\norm{\bar u - u^{\star}} = \epsilon$.

The following lemma shows that the matrices used in the backwards
recursion are close. It is proved in Appendix~\ref{app:matrixClose}

\begin{lemma}
  \label{lem:matrixClose}
  {\it
    The matrices from the backwards recursions of DDP and Newton's method are close in the following sense:
    \begin{subequations}
      \begin{align}
        \nonumber
        &\tilde \Gamma_{n,k} - \Gamma_{n,k}  \\
        &=
          \begin{bmatrix}
            \tilde \Gamma_{n,k}^{11} & \tilde \Gamma_{n,k}^{1x} & \tilde \Gamma_{n,k}^{1u} \\
            \tilde \Gamma_{n,k}^{x1} & \tilde \Gamma_{n,k}^{xx} & \tilde \Gamma_{n,k}^{xu} \\
            \tilde \Gamma_{n,k}^{u1} & \tilde \Gamma_{n,k}^{ux} & \tilde \Gamma_{n,k}^{uu}
          \end{bmatrix}
                                                                  -
                                                                  \begin{bmatrix}
                                                                    \Gamma_{n,k}^{11} & \Gamma_{n,k}^{1x} & \Gamma_{n,k}^{1u} \\
                                                                    \Gamma_{n,k}^{x1} & \Gamma_{n,k}^{xx} & \Gamma_{n,k}^{xu} \\
                                                                    \Gamma_{n,k}^{u1} & \Gamma_{n,k}^{ux} & \Gamma_{n,k}^{uu}
                                                                  \end{bmatrix}
        \\
        &=
          \begin{bmatrix}
            O(\epsilon) & O(\epsilon^2) & O(\epsilon^2) \\
            O(\epsilon^2) & O(\epsilon) & O(\epsilon) \\
            O(\epsilon^2) & O(\epsilon) & O(\epsilon)
          \end{bmatrix},
      \end{align}
      \begin{align}
        \tilde S_{n,k} - S_{n,k} = &
                                     \begin{bmatrix}
                                       \tilde S_{n,k}^{11} & \tilde S_{n,k}^{1x} \\
                                       \tilde S_{n,k}^{x1} & \tilde S_{n,k}^{xx}
                                     \end{bmatrix}
                                                             -
                                                             \begin{bmatrix}
                                                               S_{n,k}^{11} & S_{n,k}^{1x} \\
                                                               S_{n,k}^{x1} & S_{n,k}^{xx}
                                                             \end{bmatrix} \\
        =&
           \begin{bmatrix}
             O(\epsilon) & O(\epsilon^2) \\
             O(\epsilon^2) & O(\epsilon)
           \end{bmatrix} ,
      \end{align}
      \begin{equation}
        \begin{bmatrix}
          \tilde s_{n,l} & \tilde K_{n,k}
        \end{bmatrix}
        -
        \begin{bmatrix}
          s_{n,l} & K_{n,k}
        \end{bmatrix} =
        \begin{bmatrix}
          O(\epsilon^2) & O(\epsilon)
        \end{bmatrix}.
      \end{equation}
      Furthermore, the following matrices are small:
      \begin{align}
        \begin{bmatrix}
          \Gamma_{n,k}^{1x} & \Gamma_{n,k}^{1,u}
        \end{bmatrix} & = O(\epsilon),
        & \begin{bmatrix}
          \tilde \Gamma_{n,k}^{1x} & \tilde \Gamma_{n,k}^{1,u}
        \end{bmatrix} = O(\epsilon)
        \\
        s_{n,k} &= O(\epsilon),
                      & \tilde s_{n,k} = O(\epsilon).
      \end{align}
    \end{subequations}
  }
\end{lemma}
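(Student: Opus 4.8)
The plan is to prove all three estimates simultaneously by a single backward induction on the stage index $k$, running from $k=T+1$ down to $k=0$, exploiting the observation made just after Lemma~\ref{lem:newtonMatrices}: the DDP recursion \eqref{eq:TildeGammaBackprop}--\eqref{eq:TildeSDef} and the Newton recursion \eqref{eq:GammaDef}--\eqref{eq:newton_dp_solution_S} are term-for-term identical except that $\tilde D_{n,k}=\sum_l \tilde S_{n,k+1}^{1x^l}G_k^l$ in \eqref{eq:TildeDDef} replaces $D_{n,k}=\sum_l \Omega_{n,k+1}^l G_k^l$ in \eqref{eq:newton_dp_solution_D}. The induction hypothesis at stage $k+1$ is the stated block bound on $\tilde S_{n,k+1}-S_{n,k+1}$ together with the smallness of the first-order (gradient) blocks; the inductive step pushes these through one application of each recursion. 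The base case $k=T+1$ is immediate, since $\tilde S_{n,T+1}=S_{n,T+1}=0$ and $\Omega_{n,T+1}=0$, and throughout I would use that $A_k,B_k,G_k^l,F_k^{-1},\tilde F_k^{-1}$ are uniformly bounded on a compact neighborhood of the equilibrium trajectory, the boundedness of the inverses being supplied by the invertibility hypothesis via Lemma~\ref{lem:quadSol}.

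First I would establish the smallness estimates, since these are exactly what make the off-diagonal blocks of the differences $O(\epsilon^2)$ rather than $O(\epsilon)$. The key input is the equilibrium stationarity $\J(u^\star)=0$: at $\bar u=u^\star$ the per-stage first-order conditions force the own-player gradient blocks $\Gamma_{n,k}^{u_n1}$ to vanish, hence $H_k=0$ and $s_k=-F_k^{-1}H_k=0$; local Lipschitz smoothness of the derivatives upgrades this to $s_{n,k},\tilde s_{n,k}=O(\epsilon)$ when $\norm{\bar u-u^\star}=\epsilon$. The remaining first-order blocks $\Gamma_{n,k}^{1x},\Gamma_{n,k}^{1u}$ would be handled by a companion backward recursion through \eqref{eq:GammaDef}, again anchored at $u^\star$ by the vanishing stationarity residuals, with the same bound inherited by the tilde quantities.

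With smallness in hand, the inductive step is bookkeeping on \eqref{eq:GammaDef} and \eqref{eq:TildeGammaBackprop}. The $(1,\cdot)$ rows do not see the $D$-terms, so $\tilde\Gamma_{n,k}^{11}-\Gamma_{n,k}^{11}=\tilde S_{n,k+1}^{11}-S_{n,k+1}^{11}=O(\epsilon)$, while $\tilde\Gamma_{n,k}^{1x}-\Gamma_{n,k}^{1x}=(\tilde S_{n,k+1}^{1x}-S_{n,k+1}^{1x})A_k=O(\epsilon^2)$ and likewise $\tilde\Gamma_{n,k}^{1u}-\Gamma_{n,k}^{1u}=(\tilde S_{n,k+1}^{1x}-S_{n,k+1}^{1x})B_k=O(\epsilon^2)$ directly from the hypothesis. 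The lower-right $(x,u)\times(x,u)$ blocks pick up $A_k^\top(\tilde S_{n,k+1}^{xx}-S_{n,k+1}^{xx})A_k=O(\epsilon)$ together with $\tilde D_{n,k}-D_{n,k}$, which I must show is $O(\epsilon)$. The strategy differences then follow by matrix-inverse perturbation applied to \eqref{eq:newton_strategy}/\eqref{eq:ddpStrategyMatrices}: since $\tilde F_k-F_k=O(\epsilon)$, one has $\tilde F_k^{-1}-F_k^{-1}=-\tilde F_k^{-1}(\tilde F_k-F_k)F_k^{-1}=O(\epsilon)$, giving $\tilde K_k-K_k=O(\epsilon)$, whereas $\tilde s_k-s_k$ improves to $O(\epsilon^2)$ because $H_k=O(\epsilon)$ and $\tilde H_k-H_k=O(\epsilon^2)$ (the latter from the $(u,1)$ block bound $\tilde\Gamma^{u1}_{n,k}-\Gamma^{u1}_{n,k}=B_k^\top(\tilde S^{x1}_{n,k+1}-S^{x1}_{n,k+1})=O(\epsilon^2)$). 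Substituting back through the $S$-update \eqref{eq:newton_dp_solution_S}/\eqref{eq:TildeSDef} and using $s_k,\tilde s_k=O(\epsilon)$ then closes the induction with the claimed block pattern for $\tilde S_{n,k}-S_{n,k}$.

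The step I expect to be the \textbf{main obstacle} is controlling $\tilde D_{n,k}-D_{n,k}=\sum_l(\tilde S_{n,k+1}^{1x^l}-\Omega_{n,k+1}^l)G_k^l$. DDP builds $\tilde D_{n,k}$ from the value-function gradient $\tilde S_{n,k+1}^{1x}$, which through \eqref{eq:TildeSDef} carries the feedback contribution $\tilde\Gamma^{1u}_{n,k+1}\tilde K_{k+1}$, whereas Newton builds $D_{n,k}$ from the pure adjoint $\Omega_{n,k+1}$ defined by \eqref{eq:newton_dp_solution_omega1}. Unlike the single-agent case of \cite{murray1984differential}, where the envelope identity makes the value gradient equal the adjoint, in a game this equality holds only through each player's \emph{own} control, and the cross-player envelope terms entering via $\tilde K_{k+1}$ must themselves be shown negligible. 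Thus the crux is to prove that the first-order blocks $\Gamma_{n,k}^{1u}$ (including the cross-player pieces) and the feedforward $s_k$ are genuinely $O(\epsilon)$ so that $\tilde S_{n,k+1}^{1x}-\Omega_{n,k+1}=O(\epsilon)$, a fact that rests entirely on the stationarity $\J(u^\star)=0$. Making this uniform in $k$, so that the recursion does not amplify the constants over the horizon, and handling the inverse of the \emph{asymmetric} stacked matrix $F_k$ (whose $n$-th block row mixes different players' Hessians, precluding any appeal to positive-definiteness) are the remaining technical points, both resolved by the quantitative invertibility furnished by Lemma~\ref{lem:quadSol} on a fixed neighborhood.
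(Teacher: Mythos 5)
Your proposal follows essentially the same route as the paper's proof: backward induction on $k$ tracking block-wise closeness of $\Gamma$, $S$, $F$, $P$, $H$, $s$, $K$; smallness of the feedforward/gradient terms obtained from the stationarity $\J(u^\star)=0$ plus Lipschitz continuity; perturbation of the bounded inverse $F_k^{-1}$ for the $s_k$, $K_k$ estimates; and resolution of the only structural discrepancy between the two recursions, $\tilde D_{n,k}$ versus $D_{n,k}$, by showing $\tilde S^{1x}_{n,k+1}=\Omega_{n,k+1}+O(\epsilon)$. The "main obstacle" you single out is exactly where the paper's appendix spends its effort, and it resolves it the way you sketch: via the adjoint identity \eqref{eq:omega_equiv}, the relation $M^{1u}_{n,k}+\Omega_{n,k+1}B_k=\partial J_n/\partial u_{:,k}=O(\epsilon)$ in \eqref{eq:J_grad_small}, and the induction hypotheses \eqref{eq:close_S0}--\eqref{eq:close_S1}.

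One step in your outline would fail as written: you propose to prove $\Gamma^{1x}_{n,k}=O(\epsilon)$ by a companion recursion "anchored at $u^\star$ by the vanishing stationarity residuals." Stationarity at a Nash equilibrium kills only own-control gradients, not state gradients: as $\bar u\to u^\star$ one has $\Gamma^{1x}_{n,k}\to M^{1x}_{n,k}+\Omega_{n,k+1}A_k=\Omega_{n,k}$, the costate, which is generically nonzero. This defect is inherited from the lemma statement itself rather than from your strategy; the paper's appendix version of the lemma, \eqref{eq:closeness}, quietly drops the $\Gamma^{1x}$ claim and only asserts and proves $\Gamma^{1u}_{n,k}=O(\epsilon)$ and $\tilde\Gamma^{1u}_{n,k}=O(\epsilon)$ (\eqref{eq:close_gamma2}--\eqref{eq:close_gamma3}), which is all that is used downstream (for $\tilde H_k-H_k=O(\epsilon^2)$, the feedforward bound, and the $S$-update). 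Restrict your smallness claim to the control-gradient blocks and the rest of your argument goes through as in the paper.
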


After showing that the matrices are close, it can be shown that the
states and actions computed in the update steps are close. It is
proved in Appendix~\ref{app:solClose}.

\begin{lemma}
  \label{lem:solClose}
  {\it
    The states and actions computed by DDP and Newton's method
    are close:
    \begin{subequations}
      \begin{align}
        \delta u_{:,k}^D - \delta u_{:,k}^N &= O(\epsilon^2) \\
        \delta x_k^D - \delta x_k^N &= O(\epsilon^2)
      \end{align}
      Furthermore, the updates are small:
      \begin{align}
        \delta u_{:,k}^D & =O(\epsilon ), & \delta u_{:,k}^N &= O(\epsilon) \\
        \delta x_k^D &= O(\epsilon), & \delta x_k^N &= O(\epsilon).
      \end{align}
    \end{subequations}
  }
\end{lemma}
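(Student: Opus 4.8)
The plan is to prove the smallness and closeness claims together by forward induction on the stage index $k$, from $k=0$ to $k=T$, using the affine feedback representations of the two update steps together with the matrix estimates from Lemma~\ref{lem:matrixClose}. The two steps admit the feedback forms $\delta u_{:,k}^D = \tilde K_k \delta x_k^D + \tilde s_k$ and $\delta u_{:,k}^N = K_k \delta x_k^N + s_k$, where $\delta x_k^D$ is propagated by the nonlinear system dynamics of the DDP forward pass and $\delta x_k^N$ by the linearized dynamics $\delta x_{k+1}^N = A_k \delta x_k^N + B_k \delta u_{:,k}^N$ of the Newton game in \eqref{eq:newton_dp_dynamics0}. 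Both recursions start from $\delta x_0^D = \delta x_0^N = 0$.

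First I would establish the smallness estimates. Since $\delta x_0^N = 0 = O(\epsilon)$, and Lemma~\ref{lem:matrixClose} gives $s_k = O(\epsilon)$ with $K_k$ bounded near the equilibrium (where $\partial \J/\partial u$ is invertible), the feedback law yields $\delta u_{:,k}^N = K_k \delta x_k^N + s_k = O(\epsilon)$ whenever $\delta x_k^N = O(\epsilon)$; the linear dynamics then propagate this to $\delta x_{k+1}^N = A_k \delta x_k^N + B_k \delta u_{:,k}^N = O(\epsilon)$, closing the induction. The identical argument, using $\tilde s_k = O(\epsilon)$ and boundedness of $\tilde K_k$, gives $\delta u_{:,k}^D, \delta x_k^D = O(\epsilon)$; the only new ingredient is that the nonlinear remainder in the DDP dynamics is quadratic in $\delta z_k^D = O(\epsilon)$, hence $O(\epsilon^2)$, which does not disturb the $O(\epsilon)$ order.

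Next I would establish the closeness estimates by a second forward induction on $e_k^x := \delta x_k^D - \delta x_k^N$ and $e_k^u := \delta u_{:,k}^D - \delta u_{:,k}^N$, with base case $e_0^x = 0$. Subtracting the two feedback laws and splitting the cross term gives
\begin{equation}
e_k^u = \tilde K_k e_k^x + (\tilde K_k - K_k)\delta x_k^N + (\tilde s_k - s_k).
\end{equation}
By Lemma~\ref{lem:matrixClose}, $\tilde K_k - K_k = O(\epsilon)$ and $\tilde s_k - s_k = O(\epsilon^2)$, while the smallness step supplies $\delta x_k^N = O(\epsilon)$ and $\tilde K_k = O(1)$; combined with the inductive hypothesis $e_k^x = O(\epsilon^2)$, this gives $e_k^u = O(\epsilon^2)$. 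Subtracting the two state recursions, and using that the DDP dynamics agree with their linearization up to the quadratic remainder $R_k(\delta x_k^D,\delta u_{:,k}^D) = O(\epsilon^2)$, yields
\begin{equation}
e_{k+1}^x = A_k e_k^x + B_k e_k^u + O(\epsilon^2) = O(\epsilon^2),
\end{equation}
which closes the induction.

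The step I expect to be the main obstacle is the cross term $(\tilde K_k - K_k)\delta x_k^N$ in the expression for $e_k^u$: neither factor is individually $O(\epsilon^2)$, and only their product attains the required order. This is precisely where the matrix closeness estimate $\tilde K_k - K_k = O(\epsilon)$ must be combined with the independently established smallness $\delta x_k^N = O(\epsilon)$, which is why the two sets of claims are proved in sequence rather than separately. Care is also needed to confirm that the $O(\epsilon^2)$ errors do not accumulate to a lower order across the horizon; because $T$ is fixed and $\|A_k\|, \|B_k\|, \|\tilde K_k\|$ are uniformly bounded in the neighborhood where $\partial \J/\partial u$ is invertible, the order is preserved from stage to stage.
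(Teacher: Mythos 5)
Your proposal is correct and follows essentially the same route as the paper's proof: a forward induction from $\delta x_0 = 0$ using the affine feedback forms $\delta u_{:,k}^D = \tilde K_k \delta x_k^D + \tilde s_k$, $\delta u_{:,k}^N = K_k \delta x_k^N + s_k$, the estimates $\tilde K_k - K_k = O(\epsilon)$, $\tilde s_k - s_k = O(\epsilon^2)$, $s_k, \tilde s_k = O(\epsilon)$ from Lemma~\ref{lem:matrixClose}, and the observation that both state recursions agree with $A_k \delta x_k + B_k \delta u_{:,k}$ up to $O(\epsilon^2)$. The only difference is organizational --- you run the smallness and closeness inductions sequentially while the paper carries all the estimates in a single simultaneous induction --- and your handling of the cross term $(\tilde K_k - K_k)\delta x_k^N$ is exactly the paper's key step written in a slightly different decomposition.
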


\subsection{Proof of Theorem~\ref{thm:main}}
\label{sec:proof}
Lemma~\ref{lem:solClose} implies that $\|\delta u^N - \delta u^D\| =
O(\epsilon^2)$. Furthermore, the Newton step satisfies:
\begin{equation}
  \|\bar u + \delta u^N - u^\star\| = O(\epsilon^2).
\end{equation}
See~\cite{nocedal2006numerical}.
The proof of quadratic convergence is completed by the following steps:
\begin{subequations}
  \begin{align}
    \|\bar u + \delta u^D - u^{\star}\| &= \| \bar u + \delta u^N - u^{\star} + \delta
                                          u^D - \delta u^N \| \\
                                        &\le \|\bar u + \delta u^N- u^{\star}\| +
                                          \|\delta u^D - \delta u^N\| \\
                                        &= O(\epsilon^2).
  \end{align}
\end{subequations}
\hfill \QED

\section{Numerical Example}

\label{sec:example}

We apply the proposed DDP algorithm for deterministic nonlinear dynamic games to a toy examples in this section. The example is impletmented in Python and all derivatives of nonlinear functions are computed via Tensorflow \cite{tensorflow2015-whitepaper}.

\begin{figure}[!t]
  \centering
  \subfigure[state]
    {
      \includegraphics[width=1.5in]{./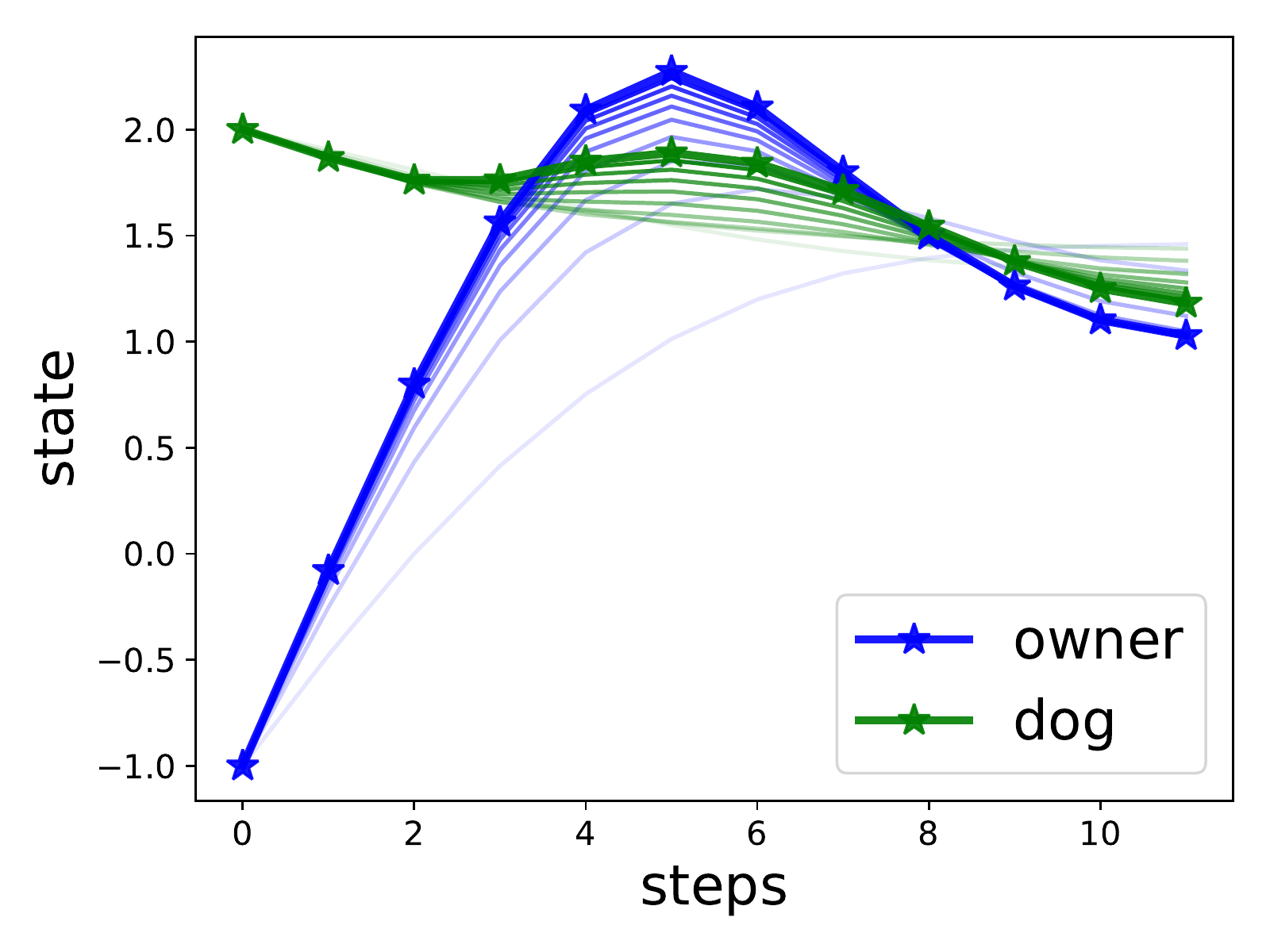}
      \label{fig:od_state}
    }
    \subfigure[state]
    {
      \includegraphics[width=1.5in]{./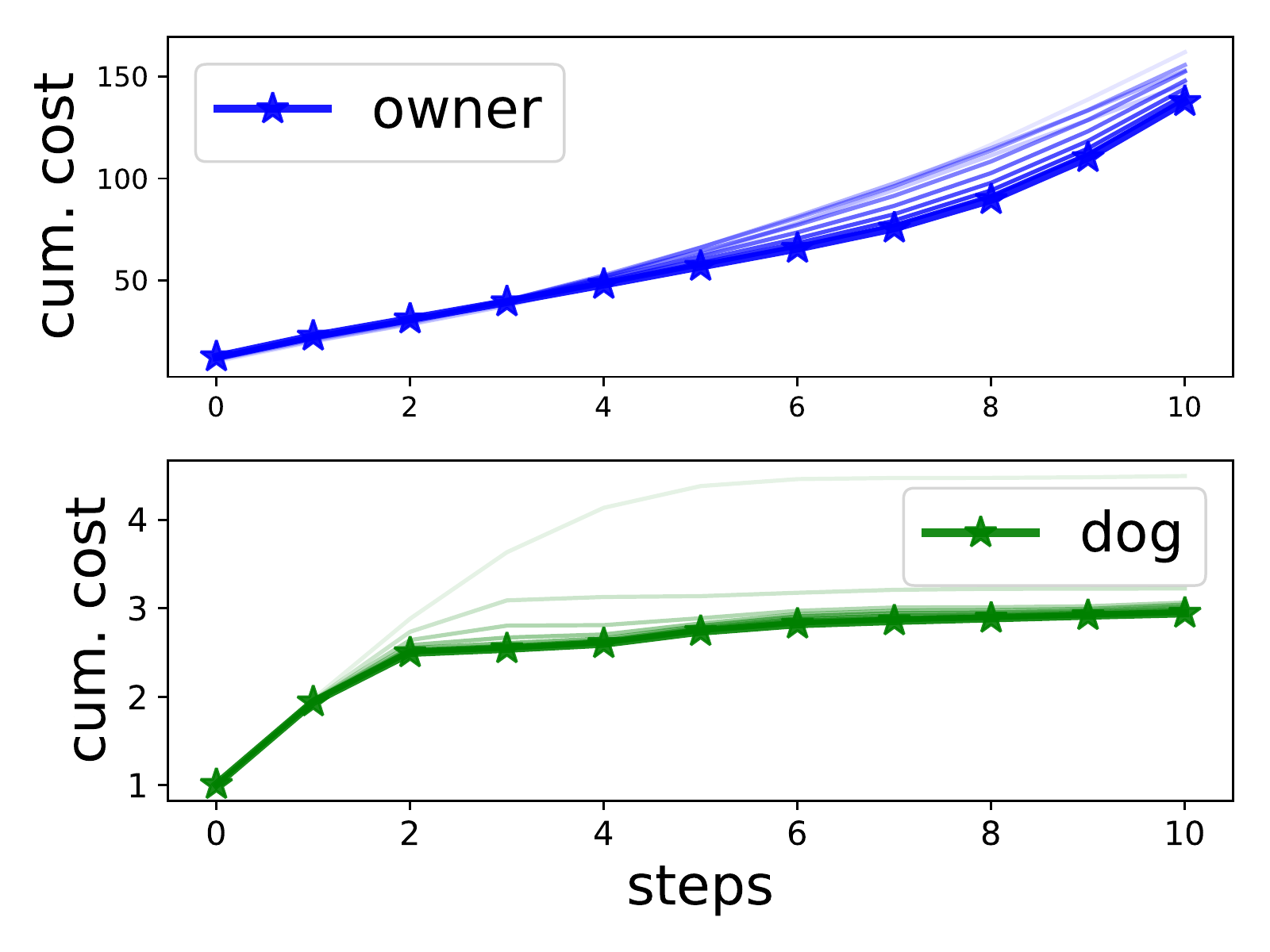}
      \label{fig:od_ccost}
    }
  \caption{Owner-dog dynamic game. In order to keep the dog around $x^1 = 2$, the owner has to overshoot and then come back to $x^0=1$. The dog learns to get closer to the owner over iterations, which is what we would expect given how the problem is formulated. As can be seen in Fig. \ref{fig:od_ccost}, the cumulative costs for both players reduces over iterations.}
  \label{fig:od}
\end{figure}

\begin{figure}[!t]
  \centering
  \includegraphics[width=6cm]{./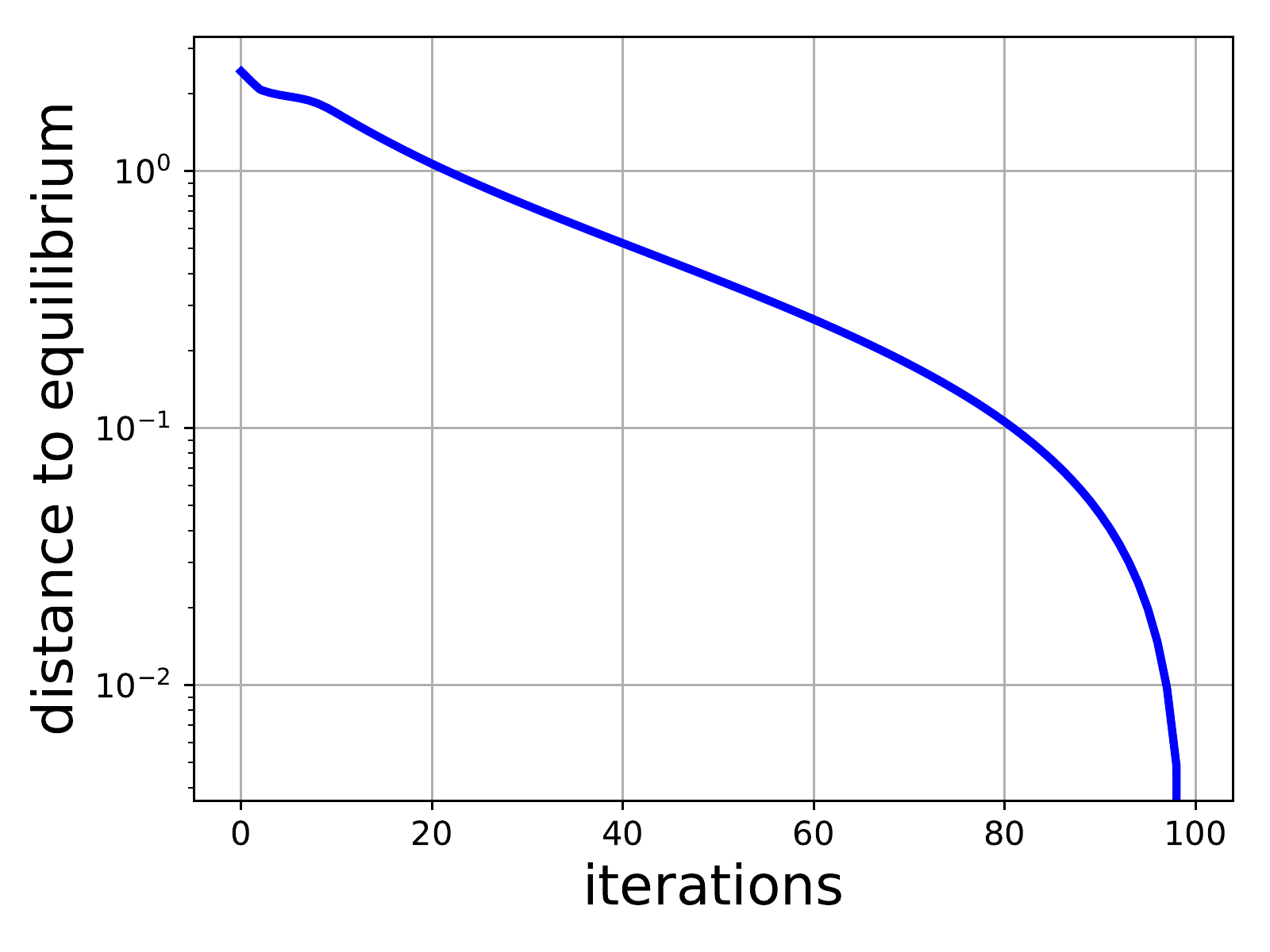}
  \caption{This shows the 2-norm distance between inputs $\bar u$ and the final equilibrium $u^{\star}$ over iterations. As can be seen that the error reduces sub-linearly on a log scaled plot, which is evidence that the algorithm converges quadratically.}
  \label{fig:od_convergence}
\end{figure}

We consider a simple 1-D owner-dog problem, with horizon $T=11$ and initial state $x_{:,0} = [-1, 2]$ where the dynamics of both the owner and the dog are given respectively by
\begin{subequations}
  \begin{align}
    x^0_{k+1} = x^0_{k} + \tanh u^0_k \\
    x^1_{k+1} = x^1_{k} + \tanh u^1_k
  \end{align}
\end{subequations}

The owner cares about going to $x^0 = 1$ and that the dog can stay at $x^1 = 2$. The dog, however, only tries to catch up with the owner. Each player also concerns itself with the energy consumption, therefore has a cost term related to the magnitude of its input. Their cost functions are formulated as
\begin{subequations}
  \begin{align}
    & c_{0,k}(x, u) = 10 \ \sigmoid ((x^0_k - 1)^2) + 40 (x^1_k - 2)^2 + (u^0_k)^2 \\
    & c_{1,k}(x, u) = \tanh^2(x^0_k - x^1_k) + (u^1_k)^2
  \end{align}
\end{subequations}

Nonlinear functions are added to the dynamics and costs to create a nonlinear game rather than for explicit physical meaning. We initialize a trajectory with zero input and initial state, i.e. $\bar u = [0., 0., \ldots, 0.]$ and $\bar x = [-1, 2, -1, 2, \ldots, -1, 2]$. We used an identity regularization matrix with a magnitude of 400.

Fig. \ref{fig:od} shows the solution via DDP to this problem over iterations, where the more transparent the trajectories, the earlier in the iterations they are. The starred trajectory is the final equilibrium solution. We simulated 100 iterations after the initial trajectory and picked 10 uniformly spaced ones to show in the figures.

\section{Conclusion}
\label{sec:conclusion}

In this paper we have shown how differential dynamic programming
extends to dynamic games. The key steps were involved finding explicit
forms for both DDP and Newton iterations that enable clean comparison
of their solutions. We demonstrated the performance of the algorithm
on a simple nonlinear dynamic game.

Many extensions are possible. We will examine larger examples and work
on numerical scaling. Also of interest are stochastic dynamic games
and problems in which agents have differing, imperfect information
sets. Additionally, handling scenarios in which agents have imperfect
model information will of great practical importance. 

\bibliographystyle{IEEEtran}
\bibliography{ref}

\appendices
\section{Proofs of Lemmas}
We first derive a few useful results which we'll use later in this chapter.

First, we bound the spectral radius of $F_k^{-1}$ from above
\begin{align}
  \rho(F_k^{-1}) \leq \hat F
\end{align}
where $\hat F$ is a constant. Consider inverting $\nabla_{u} \mathcal{J}(u)$ in Newton's method by successively eliminating $\delta u_{:,k}$ for $k=T,T-1,\ldots,0$. The
$F_k$ matrices are exactly the matrices which would be inverted when eliminating $\delta u_{:,k}$. Since $\nabla_u \mathcal{J}(u)$ is Lipschitz continuous, its eigenvalues are bounded away from zero in a neighborhood of $u^{\star}$. It follows that the eigenvalues of $F_K$ must also be bounded away from zero and $F_k^{-1}$ is bounded above.

Secondly, we assert that
\begin{align}
  \label{eq:omega_equiv}
  \Omega_{n,k} = \frac{\partial}{\partial x_k}\Big|_{\bar x, \bar u} \sum_{i=k}^{T} c_{n,i}(x_i, u_{:,i})
\end{align}
which means $\Omega_{n,k}$ captures the first-order effect of $x_k$ on the sum of all later costs for each player. $\Omega_{n,k}$ is constructed according to (\ref{eq:newton_dp_solution_omega1}). Equation (\ref{eq:omega_equiv}) is true for $k = T$ by construction. We proof by induction and assume that (\ref{eq:omega_equiv}) holds for $k+1$, i.e. $\Omega_{n,k+1} = \frac{\partial}{\partial x_{k+1}}\Big|_{\bar x, \bar u} \sum_{i={k+1}}^{T} c_{n,i}(x_i, u_{:,i})$, then
\begin{subequations}
  \begin{align}
    & \Omega_{n,k} = M_{n,k} + \Omega_{n, k+1} A_k \\
    & = \frac{\partial c_{n,k}(x_k, u_{:,k})}{x_k}\Big|_{\bar x, \bar u} + \left( \frac{\partial}{\partial x_{k+1}} \sum_{i=k+1}^{T} c_{n,i}(x_i, u_{:,i}) \right) \frac{\partial x_{k+1}}{\partial x_{k}} \Big|_{\bar x, \bar u} \\
    & = \frac{\partial c_{n,k}(x_k, u_{:,k})}{x_k}\Big|_{\bar x, \bar u} + \left( \frac{\partial}{\partial x_{k}}\Big|_{\bar x, \bar u} \sum_{i=k+1}^{T} c_{n,i}(x_i, u_{:,i}) \right) \\
    & = \frac{\partial}{\partial x_k}\Big|_{\bar x, \bar u} \sum_{i=k}^{T} c_{n,i}(x_i, u_{:,i})
  \end{align}
\end{subequations}
Therefore, (\ref{eq:omega_equiv}) holds for $k$. And by induction, all $k = 0,1, \ldots, T$.
\hfill \QED

The third useful result is that
\begin{align}
  \label{eq:J_grad_small}
  M^{1u}_{n,k} + \Omega_{n,k+1} B_k = \frac{\partial J_n(u)}{\partial u_{:,k}}\Big|_{\bar u} = O(\epsilon)
\end{align}
$\frac{\partial J_n(u)}{\partial u_{:,k}}\Big|_{\bar u} = O(\epsilon)$ is true because $J_n(u)$ is twice differentiable hence Lipschitz, i.e.
\begin{align}
  \Big|\Big| \frac{J_n(u)}{\partial u_{:,k}}\Big|_{\bar u} - \frac{J_n(u)}{\partial u_{:,k}}\Big|_{u^{\star}} \Big|\Big|^2 \leq \text{constant} \cdot \|\bar u - u^{\star}\|^2 = O(\epsilon)
\end{align}
The first equality holds because
\begin{subequations}
  \begin{align}
    \frac{\partial J(u)}{\partial u_{:,k}}\Big|_{\bar u} =& \frac{\partial }{\partial u_{:,k}}\Big|_{\bar x, \bar u} \sum_{i=0}^T c_{n,i}(x_i, u_{:,i}) = \frac{\partial }{\partial u_{:,k}}\Big|_{\bar x, \bar u} \sum_{i=k}^T c_{n,i}(x_i, u_{:,i}) \\
    =& \frac{\partial }{\partial u_{:,k}}\Big|_{\bar x, \bar u} c_{n,k}(x_k, u_{:,k}) + \frac{\partial }{\partial u_{:,k}}\Big|_{\bar x, \bar u} \sum_{i=k+1}^T c_{n,i}(x_i, u_{:,i}) \\
    =& M_{n,k}^{1u} + \frac{\partial }{\partial x_{k+1}} \Big|_{\bar x, \bar u} \sum_{i=k+1}^T c_{n,i}(x_i, u_{:,i}) \frac{\partial x_{k+1}}{\partial u_{:,k}} \Big|_{\bar x, \bar u} \\
    =& M^{1u}_{n,k} + \Omega_{n,k+1} B_k
  \end{align}
\end{subequations}
\hfill \QED

These results are used implicitly in the later proofs of lemmas.

\if\MODE\ARXIV{
\subsection{Proof of Lemma~\ref{lem:newton_game}} \label{app:newton_game}
First, we prove that the dynamics constraints
(\ref{eq:newton_dp_dynamics0}) and (\ref{eq:newton_dp_dynamics1})
are inductive definitions of the following approximation terms:
\begin{subequations}
  \label{eq:shorthands}
  \begin{align}
    \label{eq:newton_dp_states0}
    & \delta x_k = \sum_{i = 0}^{T} \frac{\partial x_k}{\partial u_{:,i}} \Big|_{\bar x, \bar{u}} \delta u_{:,i} \\
    \label{eq:newton_dp_states1}
    & \Delta x_k^l = \sum_{i = 0}^{T} \sum_{j = 0}^{T} \delta
      u_{:,i}^\top {\frac{\partial^2 x_k^l}{\partial u_{:,i} \partial
      u_{:,j}}} \Big|_{\bar x,\bar{u}} \delta u_{:,j}, \  l = 1, 2,
      \ldots, n_x
  \end{align}
\end{subequations}
Note that $x_0$ is fixed so that \eqref{eq:shorthands} holds at
$k=0$. Now we handle each of the terms inductively.

For $\delta x_{k+1}$, we have
  \begin{align}
    & \delta x_{k+1} = \sum_{i=0}^T \frac{\partial x_{k+1}}{\partial u_{:,i}} \Big|_{\bar x, \bar u} \delta u_{:,i} \nonumber \\
    &= \sum_{i=0}^T \frac{\partial f_k(x_k, u_{:,k})}{\partial u_{:,i}} \Big|_{\bar x, \bar u} \delta u_{:,i} \nonumber \\
    &= \frac{\partial f_k(x_k, u_{:,k})}{\partial x_k} \Big|_{\bar x, \bar u} \sum_{i=0}^T  \frac{\partial x_k} {\partial u_{:,k}} \delta u_{:,i} + \frac{\partial f_k(x_k, u_{:,k})}{\partial u_{:,k}} \Big|_{\bar x, \bar u} \delta u_{:,k} \nonumber \\
    & = A_k \delta x_k + B_k \delta u_{:,k}
  \end{align}
  We used the fact that $\frac{\partial f(x_k, u_{:,k})}{\partial
    u_{:,i}}$ is zero unless $i=k$.

  For $\Delta x_{k+1}$, row $l$ is given by:
  \begin{subequations}
    \begin{align}
      & \Delta x_{k+1}^l = \sum_{i = 0}^{T} \sum_{j = 0}^{T} \delta u_{:,i}^\top {\frac{\partial^2 f^l_k(x_k, u_{:,k})}{\partial u_{:,i} \partial u_{:,j}}} \Big|_{\bar{u}} \delta u_{:,j} \\
      & = \sum_{i = 0}^{T} \sum_{j = 0}^{T} \delta u_{:,i}^\top \left( \frac{\partial^2 f^l_k}{\partial u_{:,i} \partial u_{:,j}} + (\frac{\partial x_k}{\partial u_{:,i}})^\top \frac{\partial^2 f^l_k}{\partial x_k^2} \frac{\partial x_k}{\partial u_{:,j}} \right) \Big|_{\bar{u}} \delta u_{:,j} \nonumber \\
      & + \sum_{i = 0}^{T} \sum_{j = 0}^{T} \delta u_{:,i}^\top \left( (\frac{x_k}{\partial u_{:,i}})^\top \frac{\partial^2 f^l_k}{\partial x_k \partial u_{:,j}} + \frac{\partial^2 f^l_k}{\partial u_{:,i} \partial x_k} \frac{x_k}{\partial u_{:,j}} \right) \Big|_{\bar{u}} \delta u_{:,j} \nonumber \\
      & + \sum_{i = 0}^{T} \sum_{j = 0}^{T} \delta u_{:,i}^\top \left( \sum_{p = 1}^{n_x} \frac{\partial f^l_k}{\partial x^p_k} \frac{\partial^2 x^p_k}{\partial u_{:,i} \partial u_{:,j}} \right) \Big|_{\bar{u}} \delta u_{:,j} \\
      & = \delta u_{:,k}^\top \frac{\partial^2 f^l_k}{\partial u_{:,k}^2} \delta u_{:,k} + \delta x_k^\top \frac{\partial^2 f^l_k}{\partial x_k^2} \delta x_k + \delta x_k^\top \frac{\partial^2 f^l_k}{\partial x_k \partial u_{:,k}} \delta u_{:,k} \nonumber \\
      \label{eq:newton_dp_state1_der0}
      & + \delta u_{:,k}^\top \frac{\partial^2 f^l_k}{\partial u_{:,k} \partial x_k} \delta x_k + \sum_{p=1}^{n_x} \frac{\partial f^l_k}{\partial x^p_k} \sum_{i = 0}^{T} \sum_{j = 0}^{T} \delta u_{:,i}^\top {\frac{\partial^2 x^p_k}{\partial u_{:,i} \partial u_{:,j}}} \delta u_{:,j} \\
      \label{eq:newton_dp_state1_der1}
      & =
        \begin{bmatrix}
          \delta x_k \\
          \delta u_{:,k}
        \end{bmatrix}^\top
      G^l_k
      \begin{bmatrix}
        \delta x_k \\
        \delta u_{:,k}
      \end{bmatrix} + \sum_{p=1}^{n_x} A^{lp}_k \Delta x_k^p
    \end{align}
  \end{subequations}
  To get to each terms in (\ref{eq:newton_dp_state1_der0}), we used the fact that
  \begin{subequations}
    \begin{align}
      & \frac{\partial^2 f^l_k}{\partial u_{:,i} \partial u_{:,j}} = 0, \ \text{for}\  i \neq k \ \text{or}\  j \neq k \\
      & \delta x_k = \sum_{i = 0}^T \frac{\partial x_k}{\partial u_{:,i}} \delta u_{:,i} = \sum_{i = 0}^{k-1} \frac{\partial x_k}{\partial u_{:,i}} \delta u_{:,i}
    \end{align}
  \end{subequations}
  To get to (\ref{eq:newton_dp_state1_der1}), we used the fact
  \begin{subequations}
    \begin{align}
      & \frac{\partial f^l_k}{\partial x^p_k} = A^{lp}_k \\
      & \sum_{i = 0}^{T} \sum_{j = 0}^{T} \delta u_{:,i}^\top {\frac{\partial^2 x^p_k}{\partial u_{:,i} \partial u_{:,j}}} \Big|_{\bar{u}} \delta u_{:,j} = \Delta x_k^p \\
      &
        \begin{bmatrix}
          \frac{\partial^2 f^l_k}{\partial x_k^2} & \frac{\partial^2 f^l_k}{\partial x_k   \partial u_{:,k}} \\
          \frac{\partial^2 f^l_k}{\partial x_k \partial u_{:,k}} & \frac{\partial^2 f^l_k}{\partial u_{:,k}^2}
        \end{bmatrix} \Bigg|_{\bar x, \bar{u}} = G^l_k
    \end{align}
  \end{subequations}
  Both $l$ and $p$ are used to pick out the corresponding element for a vector or matrix. $A^{lp}_k$ means the $l$th row and $p$th column of matrix $A_k$. Equation (\ref{eq:newton_dp_state1_der1}) actually describes each element in (\ref{eq:newton_dp_states1}), so we've proven that both are true.

  Next we prove (\ref{eq:newton_dp_objective}) is the quadratic approximation of $J_n(u)$, i.e.
  \begin{align}
    \label{eq:newton_dp_obj_equiv}
    & \text{quad}(J_n(u))_{\bar u} =
      J_n(\bar u) + \frac{\partial J_n^T(\bar u)}{\partial u} + \frac{1}{2} \delta u^T \frac{\partial^2 J_n(\bar u)}{\partial u^2} \delta u \nonumber \\
    &= \frac{1}{2} \sum_{k=0}^T \left(
      \begin{bmatrix}
        1 \\
        \delta x_k \\
        \delta u_{:,k}
      \end{bmatrix}^T
    M_{n, k}
    \begin{bmatrix}
      1 \\
      \delta x_k \\
      \delta u_{:,k}
    \end{bmatrix}
    + M^{1k}_{n,k} \Delta x_k \right)
  \end{align} \\

  We'll need the explicit expressions for the associated derivatives.
  \begin{subequations}
    \label{eq:Jderivatives}
    \begin{align}
      & \frac{\partial J_n(u)}{\partial u_{:,i}} = \sum_{k=0}^T \left( \frac{\partial c_{n, k}(x_k,u_{:,k})}{\partial u_{:,i}} + \frac{\partial c_{n, k}(x_k,u_{:,k})}{\partial x_k} \frac{\partial x_k}{\partial u_{:,i}} \right) \\
      & \frac{\partial^2 J_n(u)}{\partial u_{:,i} \partial u_{:,j}} \nonumber \\
      & = \sum_{k=0}^T \left( \frac{\partial^2 c_{n, k}(x_k,u_{:,k})}{\partial u_{:,i} \partial u_{:,j}} + \frac{\partial x_k}{\partial u_{:,i}}^\top \frac{\partial^2 c_{n, k}(x_k,u_{:,k})}{\partial x_k^2} \frac{\partial x_k}{\partial u_{:,j}} \right) \nonumber \\
      & + \sum_{k=0}^T \left( \frac{\partial x_k}{\partial u_{:,i}}^\top \frac{\partial^2 c_{n, k}(x_k,u_{:,k})}{\partial x_k \partial u_{:,j}} + \frac{\partial^2 c_{n, k}(x_k,u_{:,k})}{\partial u_{:,i} \partial x_k} \frac{\partial x_k}{\partial u_{:,j}} \right) \nonumber \\
      \label{eq:smallHessian}
      & +\sum_{k=0}^T \sum_{l=1}^{n_x} \frac{\partial c_{n, k}(x_k,u_{:,k})}{\partial x_k^l} \frac{\partial^2 x_k^l}{\partial u_{:,i} \partial u_{:,i}}
    \end{align}
  \end{subequations}

  We break down each term in (\ref{eq:newton_dp_obj_equiv}). First the second order term.
  \begin{subequations}
    \label{eq:newton_dp_objective_second}
    \begin{align}
      & \delta u^T \frac{\partial^2 J_n(\bar u)}{\partial u^2} \delta u =
        \sum_{i,j=0}^T \delta u_{:,i}^T \frac{\partial^2 J_n(u)}{\partial u_{:,i} \partial u_{:,j}} \bigg|_{\bar u} \delta u_{:,j} \\
      & =  \sum_{i,j,k=0}^T \delta u_{:,i}^T \left(
        \frac{\partial^2 c_{n,k}(x_k,u_{:,k})}{\partial u_{:,i} \partial u_{:,j}} \bigg|_{\bar u} \right) \delta u_{:,j} \nonumber \\
      & + \sum_{i,j,k=0}^T \delta u_{:,i}^T \left( \frac{\partial x_k}{\partial u_{:,i}}^\top
        \frac{\partial^2 c_{n,k}(x_k,u_{:,k})}{\partial x_k^2} \bigg|_{\bar u}
        \frac{\partial x_k}{\partial u_{:,j}}
        \right) \delta u_{:,j} \nonumber \\
      & + \sum_{i,j,k=0}^T \delta u_{:,i}^T \left(
        \frac{\partial x_k}{\partial u_{:,i}}^\top
        \frac{\partial^2 c_{n,k}(x_k,u_{:,k})}{\partial x_k \partial u_{:,j}} \bigg|_{\bar u} \right) \delta u_{:,j} \nonumber \\
      \label{eq:newton_dp_objective_sec0}
      & + \sum_{i,j,k=0}^T \delta u_{:,i}^T \left( \frac{\partial^2 c_{n,k}(x_k,u_{:,k})}{\partial u_{:,i} \partial x_k} \bigg|_{\bar u}
        \frac{\partial x_k}{\partial u_{:,j}} \right) \delta u_{:,j} \nonumber \\
      & + \sum_{k=0}^T \sum_{p=1}^n
        \frac{\partial c_{n,k}(x_k,u_{:,k})}{\partial x_k^p} \bigg|_{\bar u}
        \sum_{i,j=0}^T \delta u_{:,i}^T \frac{\partial^2 x_k^p}{\partial u_{:,i} \partial u_{:,i}} \delta u_{:,j} \\
      \label{eq:newton_dp_objective_sec2}
      = & \sum_{k=0}^T \left( \delta u_{:,k}^\top \frac{\partial^2 c_{n,k}}{\partial u_{:,k}^2 } \bigg|_{\bar u} \delta u_{:,k} + \delta x_k^\top \frac{\partial^2 c_{n,k}}{\partial x_k^2} \bigg|_{\bar u} \delta x_k \right) + \nonumber \\
      & \sum_{k=0}^T  \left( \delta x_k ^\top \frac{\partial^2 c_{n,k}}{\partial x_k \partial u_{:,k}}\vert_{\bar u}\delta u_{:,k} + \delta u_{:,k} ^\top \frac{\partial^2 c_{n,k}}{\partial u_{:,k} \partial x_k} \bigg|_{\bar u} \delta x_k\right) + \nonumber \\
      & \sum_{k=0}^T \sum_{p=1}^n
        \frac{\partial c_{n,k}(x_k,u_{:,k})}{\partial x_k^p} \Delta x_k^p \\
      = & \sum_{k=0}^T \left(
          \begin{bmatrix}
            \delta x_k \\
            \delta u_{:,k}
          \end{bmatrix}^T
      \begin{bmatrix}
        \frac{\partial^2 c_{n,k}}{\partial x_k^2}
        & \frac{\partial^2 c_{n,k}}{\partial x_k \partial u_{:,k}} \\
        \frac{\partial^2 c_{n,k}}{\partial u_{:,k}\partial x_k}
        & \frac{\partial^2 c_{n,k}}{\partial u_{:,k}^2}
      \end{bmatrix} \bigg|_{\bar u}
          \begin{bmatrix}
            \delta x_k \\
            \delta u_{:,k}
          \end{bmatrix} \right) \nonumber \\
      & + \left( \frac{\partial c_{n,k}(x_k,u_{:,k})}{\partial x_k} \Delta x_k \right)
    \end{align}
  \end{subequations}
  The first term in \eqref{eq:newton_dp_objective_sec0} to \eqref{eq:newton_dp_objective_sec2} holds because $c_{n,k}(x_k,u_{:,k})$ only depends directly on $u_{:,i}$ and $u_{:,j}$ when $i=j=k$. The others hold because  $x_k$ only depends on $u_{:,i}$ and $u_{:,j}$ when $i,j <k$. The last term uses the definition of $\Delta x_k$ in \eqref{eq:newton_dp_states1}.

  The first order term
  \begin{subequations}
    \label{eq:newton_dp_objective_first}
    \begin{align}
      & \frac{\partial J_n(\bar u)}{\partial u} = \sum_{i=0}^T \frac{\partial J_n(u)}{\partial u_{:,i}} \delta u_{:,i} \\
      & = \sum_{i,k=0}^T \left( \frac{\partial c_{n,k}(x_k,u_{:,k})}{\partial u_{:,i}} + \frac{\partial c_{n,k}(x_k,u_{:,k})}{\partial x_k} \frac{\partial x_k}{\partial u_{:,i}} \right) \delta u_{:,i} \\
      & = \sum_{k=0}^T \left( \frac{\partial c_{n,k}(x_k,u_{:,k})}{\partial u_{:,k}} \delta u_{:,k} + \frac{\partial c_{n,k}(x_k,u_{:,k})}{\partial x_k} \sum_{i=0}^T \frac{\partial x_k}{\partial u_{:,i}} \right) \\
      & = \sum_{k=0}^T \left( \frac{\partial c_{n,k}(x_k,u_{:,k})}{\partial u_{:,k}} \delta u_{:,k} + \frac{\partial c_{n,k}(x_k,u_{:,k})}{\partial x_k} \delta x_k\right)
    \end{align}
  \end{subequations}
  And constant term
  \begin{align}
    \label{eq:newton_dp_objective_constant}
    J_n(\bar u) = \sum_{k=0}^T c_{n,k}(\bar x_k, \bar u_{:,k})
  \end{align}
  From (\ref{eq:newton_dp_objective_second}),
  (\ref{eq:newton_dp_objective_first}), and
  (\ref{eq:newton_dp_objective_constant}) it follows that (\ref{eq:newton_dp_obj_equiv}) is true.
\hfill\QED
}
\fi

\subsection{Proof of Lemma~\ref{lem:matrixClose}}
\label{app:matrixClose}
A more complete version of Lemma \ref{lem:matrixClose} is
\begin{subequations}
  \label{eq:closeness}
  \begin{align}
    \label{eq:close_D}
    \tilde D_{n,k} &= D_{n,k} + O(\epsilon) \\
    \label{eq:close_S0}
    S^{1x}_{n,k} &= \Omega_{n,k} + O(\epsilon) \\
    \label{eq:close_S1}
    \tilde S^{1x}_{n,k} &= \Omega_{n,k} + O(\epsilon) \\
    \label{eq:close_S2}
    \tilde S^{1x}_{n,k} &= S^{1x}_{n,k} + O(\epsilon^2) \\
    \label{eq:close_S3}
    \tilde S_{n,k} &= S_{n,k} + O(\epsilon) \\
    \label{eq:close_gamma0}
    \begin{bmatrix}
      \tilde \Gamma^{1x}_{n,k} & \tilde \Gamma^{1u}_{n,k}
    \end{bmatrix} &=
                    \begin{bmatrix}
                      \Gamma^{1x}_{n,k} & \Gamma^{1u}_{n,k}
                    \end{bmatrix} + O(\epsilon^2) \\
    \label{eq:close_gamma1}
    \tilde \Gamma_{n,k} &= \Gamma_{n,k} + O(\epsilon)\\
    \label{eq:close_gamma2}
    \Gamma_{n,k}^{1u} &= O(\epsilon) \\
    \label{eq:close_gamma3}
    \tilde \Gamma_{n,k}^{1u} &= O(\epsilon) \\
    \label{eq:close_F}
    \tilde F_k &= F_k + O(\epsilon) \\
    \label{eq:close_P}
    \tilde P_k &= P_k + O(\epsilon) \\
    \label{eq:close_H0}
    \tilde H_k &= H_k + O(\epsilon^2) \\
    \label{eq:close_H1}
    \tilde H_k &= O(\epsilon) \\
    \label{eq:close_H2}
    H_k &= O(\epsilon) \\
    \label{eq:close_s0}
    \tilde s_k &= s_k + O(\epsilon^2) \\
    \label{eq:close_s1}
    \tilde s_k &= O(\epsilon) \\
    \label{eq:close_s2}
    s_k &= O(\epsilon) \\
    \label{eq:close_K}
    \tilde K_k &= K_k + O(\epsilon),
  \end{align}
\end{subequations}
of which we give a proof by induction for in this section. \\

For $k = T$, because of the way these variables are constructed, they are identical, i.e.
\begin{subequations}
  \begin{align}
    \Gamma_{n,T} =& \tilde \Gamma_{n,T} \\
    F_T =& \tilde F_T \\
    P_T =& \tilde P_T \\
    H_T =& \tilde H_T \\
    s_T =& \tilde s_T \\
    K_T =& \tilde K_T \\
    S_{n,T} =& \tilde S_{n,T}
  \end{align}
\end{subequations}
So we have  (\ref{eq:close_S2})(\ref{eq:close_S3})(\ref{eq:close_gamma0})(\ref{eq:close_gamma1})(\ref{eq:close_F})(\ref{eq:close_P})(\ref{eq:close_H0})(\ref{eq:close_s0})(\ref{eq:close_K}) hold for $k = T$.

We also know that
\begin{align}
  M_{n,T}^{1u} = \frac{\partial c_{n,T}}{\partial u_T} = \frac{\partial J_n(u)}{\partial u_T}=O(\epsilon)
\end{align} where the first equality is by construction, the second is true because $u_T$ only appears in $J_n(u)$ in $c_{n,T}$. By construction, $\Gamma_{n,T}^{1u} = \tilde \Gamma_{n,T}^{1u} = M_{n,T}^{1u} = O(\epsilon)$, so (\ref{eq:close_gamma2})(\ref{eq:close_gamma3}) are true for $k = T$. \\
Similarly, $H_T$ and $\tilde H_T$ are constructed from $\Gamma_{n,T}^{u1}$ and $\tilde \Gamma_{n,T}^{u1}$, so (\ref{eq:close_H1})(\ref{eq:close_H2}) are true for $k=T$.

Because $F_k^{-1}$ is bounded above, $s_T = - F_T^{-1} H_T = - F_T^{-1} O(\epsilon) = O(\epsilon)$. Similarly, $\tilde s_T = O(\epsilon)$. Equations (\ref{eq:close_s1})(\ref{eq:close_s2}) are true for $k=T$.

From (\ref{eq:newton_dp_solution_S}) and $\Gamma_{n,T} = M_{n,T}$, we can get
\begin{subequations}
  \begin{align}
    S_{n,T}^{1x} =& M_{n,T}^{1x} + M_{n,T}^{1u} K_T + s_T^\top ( M_{n,T}^{ux} + M_{n,T}^{uu} K_T ) \\
    =& \Omega_{n,T} + O(\epsilon) K_T + O(\epsilon) ( M_{n,T}^{ux} + M_{n,T}^{uu} K_T ) \\
    =& \Omega_{n,T} + O(\epsilon)
  \end{align}
\end{subequations}
because $M_{n,T}$ is bounded. Hence (\ref{eq:close_S0}) is true. Further, (\ref{eq:close_S1}) is also true.

The time indices for $D_{n,T-1}$ and $\tilde D_{n,T-1}$ go to a maximum of $T-1$, so to prove things inductively, we need (\ref{eq:close_D}) to hold for $k=T-1$.The difference between constructions of $D_{n,T-1}$ and $\tilde D_{n,T-1}$ is in that the former uses $\Omega_{n,T}$ and the later uses $\tilde S_{n,T}^{1x}$. But since we've proven $\Omega_{n,T} = \tilde S_{n,T}^{1x} + O(\epsilon)$, and $G_{T-1}$ is bounded, we can also conclude $D_{n,T-1} = \tilde D_{n,T-1} + O(\epsilon)$. Therefore (\ref{eq:close_D}) is true for $k = T-1$.

So far, we've proved that for the last step, either $k = T$ or $k= T -1$, (\ref{eq:closeness}) is true. Assuming except for (\ref{eq:close_D}), (\ref{eq:closeness}) is true for $k+1$ and (\ref{eq:close_D}) is true for $k$. If we can prove all equations hold one step back, our proof by induction would be done.

Assume (\ref{eq:close_D}) holds for $k$ and other equations in (\ref{eq:closeness}) hold for $k+1$. Readers be aware that we'll use these assumptions implicitly in the derivations following.

From (\ref{eq:GammaDef}) we can get
\begin{subequations}
  \begin{align}
    \Gamma_{n,k}^{1u} =& M_{k,T}^{1u} + S_{n,k+1}^{1x} B_k \\
    =& M_{k,T}^{1u} + \Omega_{n,k+1} B_k + O(\epsilon) B_k \\
    =& O(\epsilon)
  \end{align}
\end{subequations}
Here we used (\ref{eq:J_grad_small}). Similarly, we can prove $\tilde \Gamma_{n,k}^{1u} = O(\epsilon)$. So (\ref{eq:close_gamma2}) and (\ref{eq:close_gamma3}) hold for $k$.

From (\ref{eq:GammaDef}) and (\ref{eq:TildeGammaBackprop}) we can compute the difference between $\tilde \Gamma_{n,k}$ and $\Gamma_{n,k}$ as
\begin{subequations}
  \begin{align}
    & \ \tilde \Gamma_{n,k} - \Gamma_{n,k} \nonumber \\
    = &
        \begin{bmatrix}
          \tilde S_{n, k+1}^{11} - S_{n, k+1}^{11} & \mathbf{0} & \mathbf{0} \\
          A_k^\top (\tilde S_{n,k+1}^{x1} - S_{n,k+1}^{x1}) & \mathbf{0} & \mathbf{0} \\
          B_k^\top  (\tilde S_{n,k+1}^{x1} - S_{n,k+1}^{x1}) & \mathbf{0} & \mathbf{0}
        \end{bmatrix} \nonumber \\
    & +
      \begin{bmatrix}
        \mathbf{0} & (\tilde S_{n,k+1}^{1x} - S_{n,k+1}^{1x})A_k & \mathbf{0} \\
        \mathbf{0} & A_k^\top (\tilde S_{n,k+1}^{xx} - S_{n,k+1}^{xx})A_k + (\tilde D_k^{xx} - D_k^{xx}) & \mathbf{0} \\
        \mathbf{0} & B_k^\top (\tilde S_{n,k+1}^{xx} - S_{n,k+1}^{xx}) A_k + (\tilde D_k^{ux} - D_k^{ux}) & \mathbf{0}
      \end{bmatrix} \nonumber \\
    & +
      \begin{bmatrix}
        \mathbf{0} & \mathbf{0} & (\tilde S_{n,k+1}^{1x} - S_{n,k+1}^{1x})B_k \\
        \mathbf{0} & \mathbf{0} & A_k^\top (\tilde S_{n,k+1}^{xx} - S_{n,k+1}^{xx}) B_k + (\tilde D_k^{xu} - D_k^{xu}) \\
        \mathbf{0} & \mathbf{0} & B_k^\top (\tilde S_{n,k+1}^{xx} - S_{n,k+1}^{xx}) B_k + (\tilde D_k^{uu} - D_k^{uu})
      \end{bmatrix} \\
    =& \begin{bmatrix}
      O(\epsilon) & A_k O(\epsilon^2) & B_k O(\epsilon^2) \\
      A_k^\top O(\epsilon^2) & A_k^\top O(\epsilon) A_k  + O(\epsilon) & A_k^\top O(\epsilon) B_k  + O(\epsilon) \\
      B_k^\top O(\epsilon^2)  & B_k^\top O(\epsilon) A_k + O(\epsilon) & B_k^\top O(\epsilon) B_k + O(\epsilon)
    \end{bmatrix} \\
    =&
       \begin{bmatrix}
         O(\epsilon) & O(\epsilon^2) & O(\epsilon^2) \\
         O(\epsilon^2) & O(\epsilon) & O(\epsilon) \\
         O(\epsilon^2) & O(\epsilon) & O(\epsilon)
       \end{bmatrix}
  \end{align}
\end{subequations}
from which we can see that (\ref{eq:close_gamma0}) and (\ref{eq:close_gamma1}) are true.
Once we proved the closeness between $\tilde \Gamma_{n,k}$ and $\Gamma_{n,k}$ and the specific terms are $O(\epsilon)$, i.e. (\ref{eq:close_gamma0}) to (\ref{eq:close_gamma3}), because of they way they are constructed from $\tilde \Gamma_{n,k}$ and $\Gamma_{n,k}$, we are safe to say
\begin{subequations}
  \begin{align}
    \tilde F_k &= F_k + O(\epsilon) \\
    \tilde P_k &= P_k + O(\epsilon) \\
    \tilde H_k &= H_k + O(\epsilon^2) \\
    \tilde H_k &= O(\epsilon) \\
    H_k &= O(\epsilon)
  \end{align}
\end{subequations}
Therefore, (\ref{eq:close_F}), (\ref{eq:close_P}), (\ref{eq:close_H0}), (\ref{eq:close_H1}) and (\ref{eq:close_H2}) are true for $k$.

Now that we have the results with $F_k$, $\tilde F_k$, $H_k$, $\tilde H_k$, $P_k$ and $\tilde P_k$, we can move to what are immediately following, i.e. $s_k$, $\tilde s_k$, $K_k$ and $\tilde K_k$.
\begin{align}
  s_k =& - F_k^{-1} H_k = - F_k^{-1} O(\epsilon) = O(\epsilon)
\end{align}
which is true because $F_k^{-1}$ is bounded above. Similarly, we have $\tilde s_k = O(\epsilon)$. Equations (\ref{eq:close_s1}) and (\ref{eq:close_s2}) are true.

\begin{subequations}
  \begin{align}
    \tilde s_k &= - \tilde F_k^{-1} \tilde H_k = - (F_k + O(\epsilon))^{-1} (H_k + O(\epsilon^2)) \\
               &= - (F_k^{-1} + O(\epsilon)) (H_k + O(\epsilon^2)) \\
               &= - F_k^{-1} H_k + F_k^{-1} O(\epsilon^2) + H_k O(\epsilon) + O(\epsilon^2) \\
               &= s_k + O(\epsilon^2) \\
    \tilde K_k &= - \tilde F_k^{-1} \tilde P_k = - (F_k + O(\epsilon))^{-1} (P_k + O(\epsilon)) \\
               &= - (F_k^{-1} + O(\epsilon)) (P_k + O(\epsilon)) \\
               &= - F_k^{-1} P_k + (F_k^{-1} + P_k) O(\epsilon) + O(\epsilon^2) \\
               &= K_k + O(\epsilon)
  \end{align}
\end{subequations}
Equations (\ref{eq:close_s0}) and (\ref{eq:close_K}) are true for $k$.

Now we are equipped to get closeness/small results for $S_{n,k}$ and $\tilde S_{n,k}$.
\begin{subequations}
  \begin{align}
    & \ \tilde S_{n,k} - S_{n,k} \nonumber \\
    =&
       \begin{bmatrix}
         1 & 0 & 0 \\
         0 & I & \tilde K_k^\top
       \end{bmatrix} \tilde \Gamma_{n, k}
                 \begin{bmatrix}
                   1 & 0 \\
                   0 & I \\
                   0 & \tilde K_k
                 \end{bmatrix} -
                       \begin{bmatrix}
                         1 & 0 & 0 \\
                         0 & I & K_k^\top
                       \end{bmatrix} \Gamma_{n, k}
                                 \begin{bmatrix}
                                   1 & 0 \\
                                   0 & I \\
                                   0 & K_k
                                 \end{bmatrix} \nonumber \\
    & +
      \begin{bmatrix}
        \tilde s_k^\top \tilde \Gamma_{n, k}^{uu} \tilde s_k + 2 \tilde s_k^\top \tilde \Gamma_{n, k}^{u1} - s_k^\top \Gamma_{n, k}^{uu} s_k - 2 s_k^\top \Gamma_{n, k}^{u1} & \mathbf{0} \\
        (\tilde \Gamma_{n, k}^{xu} + \tilde \Gamma_{n, k}^{uu} \tilde K_k) \tilde s_k - (\Gamma_{n, k}^{ux} + \Gamma_{n, k}^{uu} K_k)^\top s_k & \mathbf{0}
      \end{bmatrix} \nonumber \\
    & +
      \begin{bmatrix}
        \mathbf{0} & \tilde s_k^\top (\tilde \Gamma_{n, k}^{ux} + \tilde \Gamma_{n, k}^{uu} \tilde K_k) - s_k^\top (\Gamma_{n, k}^{ux} + \Gamma_{n, k}^{uu} K_k) \\
        \mathbf{0} & \mathbf{0}
      \end{bmatrix} \\
    =&
       \begin{bmatrix}
         \tilde \Gamma_{n,k}^{11} & \tilde \Gamma_{n,k}^{1x} + \tilde \Gamma_{n,k}^{1u} \tilde K \\
         \tilde \Gamma_{n,k}^{x1} + \tilde K^\top \tilde \Gamma_{n,k}^{u1} & \tilde \Gamma_{n,k}^{xx} + 2 \tilde \Gamma_{n,k}^{xu} \tilde K + \tilde K^\top \tilde \Gamma_{n,k}^{uu} \tilde K
       \end{bmatrix} \nonumber \\
    & -
      \begin{bmatrix}
        \Gamma_{n,k}^{11} & \Gamma_{n,k}^{1x} + \Gamma_{n,k}^{1u} K \\
        \Gamma_{n,k}^{x1} + K^\top \Gamma_{n,k}^{u1} & \Gamma_{n,k}^{xx} + 2 \Gamma_{n,k}^{xu} K + K^\top \Gamma_{n,k}^{uu} K
      \end{bmatrix} \nonumber \\
    & +
      \begin{bmatrix}
        O(\epsilon) & O(\epsilon^2) \\
        O(\epsilon) & 0
      \end{bmatrix} \\
    =&
       \begin{bmatrix}
         O(\epsilon) & O(\epsilon^2) \\
         O(\epsilon^2) & O(\epsilon)
       \end{bmatrix} \\
  \end{align}
\end{subequations}
So that (\ref{eq:close_S2}) and (\ref{eq:close_S3}) are true for $k$.

\begin{subequations}
  \begin{align}
    & S_{n,k}^{1x} = M_{n,k}^{1x} + S_{n,k+1}^{1x} A_k + \Gamma_{n,k}^{1u} K_k + s_k^\top ( \Gamma_{n,k}^{ux} + \Gamma_{n,k}^{uu} K_k ) \\
    & = M_{n,k}^{1x} + \Omega_{n,k+1} A_k + A_k O(\epsilon) \nonumber \\
    & \quad + K_k O(\epsilon) +  (\Gamma_{n,k}^{ux} + \Gamma_{n,k}^{uu} K_k) O(\epsilon) \\
    & = \Omega_{n,k} + O(\epsilon)
  \end{align}
\end{subequations}
Therefore, (\ref{eq:close_S0}) holds and then naturally (\ref{eq:close_S1}) holds.

We continue to prove that $\tilde D_{n,k-1}$ and $D_{n, k-1}$ are close, which is true because
\begin{subequations}
  \begin{align}
    \tilde D_{n,k-1} =& \sum_{l=1}^{n_x} \tilde S_{n,k}^{1x^l} G_k^l \\
    =& \sum_{l=1}^{n_x} (\Omega_{n,k}^l + O(\epsilon)) G_k^l \\
    =& \sum_{l=1}^{n_x} \Omega_{n,k}^l G_k^l + O(\epsilon) \\
    =& D_{n,k-1} + O(\epsilon)
  \end{align}
\end{subequations}
So (\ref{eq:close_D}) is true.
\hfill\QED

\subsection{Proof of Lemma~\ref{lem:solClose}} \label{app:solClose}
A more complete version of Lemma \ref{lem:solClose} is
\begin{subequations}
  \label{eq:closeness_forward}
  \begin{align}
    \label{eq:DDP_approx_states}
    \delta x_{k+1}^D =& A_k \delta x_k^D + B_k \delta u_k^D + O(\epsilon^2) \\
    \label{eq:newton_approx_states}
    \delta x_{k+1}^N =& A_k \delta x_k^N + B_k \delta u_k^N + O(\epsilon^2) \\
    \label{eq:input_small_newton}
    \delta u_k^N = O(\epsilon) \\
    \label{eq:input_small_ddp}
    \delta u_k^D = O(\epsilon) \\
    \label{eq:state_small_newton}
    \delta x_k^N = O(\epsilon) \\
    \label{eq:state_small_ddp}
    \delta x_k^D = O(\epsilon) \\
    \label{eq:input_step_close}
    \delta u_k^N - \delta u_k^D =& O(\epsilon^2)\\
    \label{eq:state_step_close}
    \delta x_k^N - \delta x_k^D =& O(\epsilon^2)\\
    \label{eq:close_update_newton_DDP}
    \delta u^N - \delta u^D =& O(\epsilon^2) \\
    \label{eq:classic_newton_convergence}
    \|\bar u + \delta u^N - u^{\star}\| =& O(\epsilon^2). \\
    \label{eq:DDP_convergence}
    \|\bar u + \delta u^D - u^{\star}\| =& O(\epsilon^2).
  \end{align}
\end{subequations}

Equation (\ref{eq:DDP_approx_states}) comes directly from the Taylor series expansion of (\ref{eq:dynamics}) and (\ref{eq:newton_approx_states}) from (\ref{eq:newton_dp_dynamics0}).

We prove (\ref{eq:input_small_newton}) to (\ref{eq:state_step_close}) by induction. For $k=0$, $\delta x_{0}^N = \delta x_{0}^D = 0$ and $\delta u_{0}^N = s_0,\  \delta x_{0}^D = \tilde s_0$. We know from the proof of lemma \ref{lem:matrixClose} that $s_0 = \tilde s_0 + O(\epsilon^2)$, $s_0 = O(\epsilon)$ and $\tilde s_0 = O(\epsilon)$,  so (\ref{eq:input_small_newton}) to (\ref{eq:state_step_close}) hold for $k=0$. Assume (\ref{eq:input_small_newton}) to (\ref{eq:state_step_close}) hold for $k$, then
\begin{subequations}
  \begin{align}
    \delta u^N_{k+1} &= K_{k+1} \delta x^N_k + s_{k+1} = O(\epsilon) \\
    \delta u^D_{k+1} &= \tilde K_{k+1} \delta x^N_k + \tilde s_{k+1} = O(\epsilon) \\
    \delta x^N_{k+1} =& A_k \delta x^N_k + B_k \delta u^N_k + O(\epsilon^2) = O(\epsilon) \\
    \delta x^D_{k+1} =& A_k \delta x^D_k + B_k \delta u^D_k + O(\epsilon^2) = O(\epsilon) \\
    \delta u^N_{k+1} - \delta u^D_{k+1} =& K_k \delta x^N_{k} - \tilde K_k \delta x^D_{k} + s_k - \tilde s_k \\
    =& K_k \delta x^N_k - (K_k + O(\epsilon)) (\delta x^N_k + O(\epsilon^2)) \nonumber \\
                     & + O(\epsilon^2) \\
    =& O(\epsilon) \delta x^N_k + O(\epsilon^2) \\
    =& O(\epsilon^2) \\
    \delta x^N_{k+1} - \delta x^D_{k+1} =& A_k ( \delta x^N_{k} - \delta x^D_{k} ) + B_k ( \delta u^N_{k} - \delta u^D_{k} ) + O(\epsilon^2) \nonumber \\
    =& O(\epsilon^2)
  \end{align}
\end{subequations}\\
So (\ref{eq:input_small_newton}) to (\ref{eq:state_step_close}) hold for $k+1$ and the proof by induction is done. Equation (\ref{eq:close_update_newton_DDP}) comes directly as a result. Equation (\ref{eq:classic_newton_convergence}) is classic convergence analysis for Newton's method \cite{nocedal2006numerical}. Equation (\ref{eq:DDP_convergence}) follows directly from (\ref{eq:close_update_newton_DDP}) and (\ref{eq:classic_newton_convergence}).
\hfill\QED

\subsection{Proof of Lemma~\ref{lem:quadSol}}
\label{app:quadSol}
As discussed in the proof of Lemma~\ref{lem:ddp_matrices}, a necessary
condition for the solution of \eqref{eq:quadQGame} is given by
\eqref{eq:ddp_necessary}. Thus, a sufficient condition for a unique
solution is that $\tilde F_k$ be invertible. At the beginning of the
appendix, whe showed that $F_k^{-1}$ exists near $u^\star$ and that
its spectral radius is bounded. 

Now we show that $\tilde F_k$ exists and $\rho(\tilde F_k^{-1})$ is bounded. 
Lemma~\ref{lem:matrixClose} implies that $\tilde F_k = F_k +
O(\epsilon)$. It follows that
\begin{equation*}
  \tilde F_k^{-1} = (F_k + O(\epsilon))^{-1} = F_{k}^{-1} - F_k^{-1}
  O(\epsilon) F_k^{-1} = F_k^{-1} + O(\epsilon).
\end{equation*}
It follows that $\tilde F_k^{-1}$ exists and is bounded in a
neighborhood of $u^\star$.
\hfill\QED

\addtolength{\textheight}{-12cm}   

\end{document}